\documentclass[hidelinks,onefignum,onetabnum]{siamart251216}

\usepackage{cite}
\usepackage{array}

\usepackage{lipsum}
\usepackage{amsfonts}
\usepackage{graphicx}
\usepackage{epstopdf}
\usepackage{algorithmic}
\ifpdf
  \DeclareGraphicsExtensions{.eps,.pdf,.png,.jpg}
\else
  \DeclareGraphicsExtensions{.eps}
\fi

\newsiamremark{remark}{Remark}
\newsiamremark{hypothesis}{Hypothesis}
\crefname{hypothesis}{Hypothesis}{Hypotheses}
\newsiamthm{claim}{Claim}
\newsiamremark{fact}{Fact}
\crefname{fact}{Fact}{Facts}

\headers{Well-balanced and AP IMEX FV Schemes}{Artiano, Ranocha, and Samantaray}

\title{Asymptotic-Preserving and Well-Balanced Linearly Implicit IMEX Schemes for the Anelastic Limit of the Isentropic Euler Equations with Gravity\thanks{Submitted to the editors DATE.
\funding{This work was funded by the Deutsche Forschungsgemeinschaft
(DFG, German Research Foundation, project number 528753982
as well as within the DFG priority program SPP~2410 with project number 526031774).
We acknowledge support from the Mainz Institute of Multiscale Modeling (M3ODEL).
This work was supported by the Max Planck Graduate Center with the
Johannes Gutenberg University of Mainz (MPGC).}}}

\author{Marco Artiano\thanks{Institute of Mathematics, Johannes Gutenberg University Mainz, Germany (\email{martiano@uni-mainz.de}).}
\and Hendrik Ranocha\thanks{Institute of Mathematics, Johannes Gutenberg University Mainz, Germany (\email{hendrik.ranocha@uni-mainz.de}).}
\and Saurav Samantaray \thanks{Institute of Mathematics, Johannes Gutenberg University Mainz, Germany (\email{ssamanta@uni-mainz.de}).}}

\usepackage{amsopn}

\newcommand{\D}{\partial}

\newcommand{\Dlt}{\Delta t}

\newcommand{\Dt}{\partial_t}
\newcommand{\dvg}{\nabla \cdot }

\newcommand{\grd}{\nabla}

\newcommand{\mbb}{\mathbb}

\newcommand{\mcal}{\mathcal}

\newcommand{\Norm}[1]{{\left\vert\kern-0.25ex\left\vert\kern-0.25ex\left\vert #1
    \right\vert\kern-0.25ex\right\vert\kern-0.25ex\right\vert}}

\newcommand{\veps}{\varepsilon}
\newcommand{\vx}{\mathbf{x}}
\newcommand{\vq}{\mathbf{q}}
\newcommand{\vu}{\mathbf{u}}
\newcommand{\mco}{\mathcal{O}}
\newcommand{\pdxo}{\partial_{x_1}}
\newcommand{\mcst}{\mathcal{\tilde{S}}}

\ifpdf
\hypersetup{
  pdftitle={Asymptotic-Preserving and Well-Balanced Linearly Implicit IMEX Schemes for the Anelastic Limit of the Isentropic Euler Equations with Gravity},
  pdfauthor={Artiano, Ranocha, and Samantaray}
}
\fi

\begin{document}

\maketitle

% REQUIRED
\begin{abstract}
We consider the compressible Euler system with anelastic scaling, modeling isentropic flows under the influence of gravity.
In the zero-Mach-number limit, the solution of the compressible Euler system converges to a variable density anelastic incompressible limit system.
In this work, we present the design and analysis of a class of higher-order linearly implicit IMEX Runge-Kutta schemes that are asymptotic preserving, i.e., they respect the transitory nature of the governing equations in the limit.
The presence of gravitational potential warrants the incorporation of the well-balancing property.
The scheme is developed as a novel combination of a penalization of a linear steady state, a finite-volume balance-preserving reconstruction, and a source term discretization preserving steady states.
The penalization plays a crucial role in obtaining a linearly implicit scheme, and well-balanced flux-source discretization ensures accuracy in very low Mach number regimes.
Some results of numerical case studies are presented to corroborate the theoretical assertions.
\end{abstract}

% REQUIRED
\begin{keywords}
Compressible Euler system, incompressible Euler system, zero Mach number limit,
IMEX-RK schemes, asymptotic-preserving methods, penalization
\end{keywords}

% REQUIRED
\begin{MSCcodes}
Primary 35L45, 35L60, 35L65, 35L67; Secondary 65M06, 65M08
\end{MSCcodes}

\section{Introduction}
\label{sec:intro}
Numerical approximation of the compressible Euler equations, owing to the multi-scale nature of the flow dynamics they model, is a challenging task. These equations model flows with velocities ranging from supersonic to low-speed (atmospheric) flows.
In this work, we consider the isentropic Euler equations with periodic or reflecting boundary conditions and source terms describing the effect of a gravitational potential, i.e.,
\begin{align}
  \D_t \rho + \grd \cdot ( \rho \vu) &= 0, \label{eq:iseuler_mass_ane} \\
  \D_t (\rho \vu ) + \grd \cdot (\rho \vu \otimes \vu ) + \frac{\grd p}
  {\veps^2} & = -\frac{\rho \grd \phi}{\veps^2},  \label{eq:iseuler_mom_ane}
\end{align}
where $\rho(t,\vx) \in \mathbb{R}$ is the fluid density, $\vq (t,\vx) = \rho \vu \in \mathbb{R}^d$ is the momentum, $p(t,\vx)  \in \mathbb{R}$ is the pressure, $\phi(\vx) \in \mathbb{R}$ is a known gravitational potential, $t$ denotes time, and $\vx \in \mathbb{R}^d$ is the spatial variable in $d$ dimensions.
The parameter $\veps$ is a non-dimensional scaled Mach number describing the ratio of a reference fluid velocity to that of a reference sound velocity, i.e.,
\[\veps = \frac{u_\mathrm{ref}}{\sqrt{p^{\prime}(\rho_\mathrm{ref})}}.\]
The above system is closed by considering an isentropic equation of state (EOS), which links the pressure and the density variables by the following power law:
\begin{equation}
  \label{eq:EOS}
  p=P(\rho) = \rho^{\gamma},
\end{equation}
with the isentropic constant $\gamma>1$.
The $\veps^2$ denominator of the source term in~\eqref{eq:iseuler_mom_ane} corresponds to a particular choice of scaling called anelastic scaling~\cite{FN09, Kle09, OP62}.

The homogeneous counterpart of the above system has been extensively studied in the literature, e.g., \cite{KM81}.
In \cite{FKM19} the authors have investigated the homogeneous system with so-called well-prepared and ill-prepared initial conditions using the framework of dissipative measure-valued solutions to conclude that the solutions of the compressible system converge to smooth solutions of the incompressible Euler system when the Mach number goes to zero.

A formal derivation of the anelastic model for the adiabatic pressure law is carried out in \cite{Kle09} and for an isentropic pressure law in \cite{AS20_AIMS}.
Following \cite{Kle09}, formally, to obtain the limit system for the compressible Euler system \eqref{eq:iseuler_mass_ane}--\eqref{eq:iseuler_mom_ane} when $\veps \to 0$, we begin by assuming that the dependent variables admit the three-term ansatz
\begin{equation}\label{eq:ansatz}
    f(t,\vx) = f_{(0)}(t, \vx) + \veps f_{(1)} (t, \vx) + \veps^2 f_{(2)}(t,\vx) + \mco (\veps^3).
\end{equation}
Plugging the above into the Euler system \eqref{eq:iseuler_mass_ane}--\eqref{eq:iseuler_mom_ane} and first balancing the $\mco (\veps^{-2})$ terms in the momentum equation \eqref{eq:iseuler_mom_ane} yields the hydrostatic balance at zeroth order:
\begin{equation}
  \label{eq:hydro_bal_an}
  \nabla p_{(0)}=-\rho_{(0)} \grd\phi.
\end{equation}
Making the anelastic assumption that the leading-order density $\rho_{(0)}$ is temporally constant, so that any temporal variation in density occurs only at higher orders of $\veps$, we can obtain the anelastic density for the limit system as
\begin{equation}\label{eq:rho_eq}
    \rho_\mathrm{eq} : = \rho_{(0)}(\vx) = \left(C_0 - \frac{\gamma - 1}{\gamma} \phi(\vx) \right)^{\frac{1}{\gamma - 1}},
\end{equation}
where $C_0$ is a constant of integration.
Similarly, a balance between $p_{(1)}$ and $\rho_{(1)}$ like  \eqref{eq:hydro_bal_an} can be obtained when $\mco (\veps^{-1})$ terms are considered from the momentum balance \eqref{eq:iseuler_mom_ane}. So, both the zeroth-order and the first-order densities $\rho_{(0)}$ and $\rho_{(1)}$, respectively, exhibit  the same dynamics.
Thus, we absorb one into the other and get the final multi-scale expansion for the density:
\begin{equation}
    \rho(t, \vx) = \rho_\mathrm{eq}(\vx) + \veps^2 \rho_{(2)}(t,\vx).
\end{equation}
Considering the $\mco (\veps^0)$ terms from the Euler system \eqref{eq:iseuler_mass_ane}--\eqref{eq:iseuler_mom_ane} we obtain the anelastic limit system:
\begin{align}
  \grd \cdot \left( \rho_{(0)} \vu_{(0)}\right) &=0, \label{eq:ane_lim_div} \\
  \D_t \left(\rho_{(0)} \vu_{(0)}\right) + \grd \cdot \left(\rho_{(0)} \vu_{(0)} \otimes
  \vu_{(0)} \right) + \grd p_{(2)} & = -\rho_{(2)} \nabla\phi.  \label{eq:ane_lim_mom}
\end{align}
The asymptotic expansion for the pressure using the equation of state $p = P(\rho)$ gives
\begin{equation*}
    p_{(0)} + \veps^2 p_{(2)}=  P \left(\rho_{(0)} +
       \veps^2 \rho_{(2)}\right)
    = P(\rho_{(0)}) + \veps^2
     \rho_{(2)} P^{\prime} (\rho_{(0)}).
\end{equation*}
We close the anelastic limit system \eqref{eq:ane_lim_div}--\eqref{eq:ane_lim_mom} by comparing the $\mco (\veps^2)$ terms in the above expansion to obtain the closure relation
\begin{equation}
    p_{(2)} = \rho_{(2)} P^{\prime} (\rho_{(0)}).
\end{equation}
Note that, as explored in \cite{FKM19}, the convergence of the solution of the compressible system \eqref{eq:iseuler_mass_ane}--\eqref{eq:iseuler_mom_ane} to the anelastic limit system \eqref{eq:ane_lim_div}--\eqref{eq:ane_lim_mom} can be obtained for both well-prepared and ill-prepared initial data; this notion will be defined precisely later.

Explicit time discretizations are plagued by prohibitively restrictive CFL conditions since the stability constraints are directly proportional to $\veps$ in the low-Mach regime.
For $\veps \approx 0$ these schemes, while adhering to the stability requirements, become almost non-evolving in time due to rather small time steps.
Moreover, explicit schemes also suffer from inaccuracies in the low Mach number regime due to their inability to filter out acoustic waves \cite{Del10}. The framework of asymptotic-preserving (AP) schemes, initially introduced by Jin \cite{ap_jin}, takes into account the transitional behavior of the governing equations for $\veps \to 0$.
These schemes transform into approximations of the limit system while remaining stable with their stability requirements being independent of the singular perturbation parameter $\veps$ as $\veps \to 0$.
Semi-implicit time-stepping schemes provide an efficacious framework for designing AP schemes \cite{DT11, DP13, CDS26, NBA14+}. The literature for semi-implicit schemes for the Euler equations is quite abundant \cite{AS20, DT11, BQR19, SAM24, NBA14+, DLV17}.
However, mere satisfaction of the AP property will not enable the scheme to perform well in the weakly asymptotic regime because of the gravitational source term.
As most of the flows under study in the low Mach regimes are perturbations of a steady state, it is absolutely critical for a numerical scheme to not amplify small perturbations to steady states.
This is usually achieved by obtaining discretizations which exactly preserve certain steady states, i.e., are well-balanced (WB) \cite{Chandrashekar2015, BLY17}. For example, the temporal semi-discretization
\begin{align}
    \frac{\rho^{n+1} - \rho^n}{\Dlt} + \dvg \vq^{n+1} = 0, \\
    \frac{\vq^{n+1} - \vq^{n}}{\Dlt} + \dvg \left(\frac{\vq^n \otimes \vq^n}{\rho^n} \right) + \frac{\grd p(\rho^{n+1})}{\veps^2} = -\frac{\rho^{n+1} \grd \phi}{\veps^2},
\end{align}
presented in \cite{AS20_AIMS} is formally asymptotic preserving.
\begin{figure}[htb]
\vspace{-3mm}
  \centering
  \includegraphics[height=0.2\textheight]{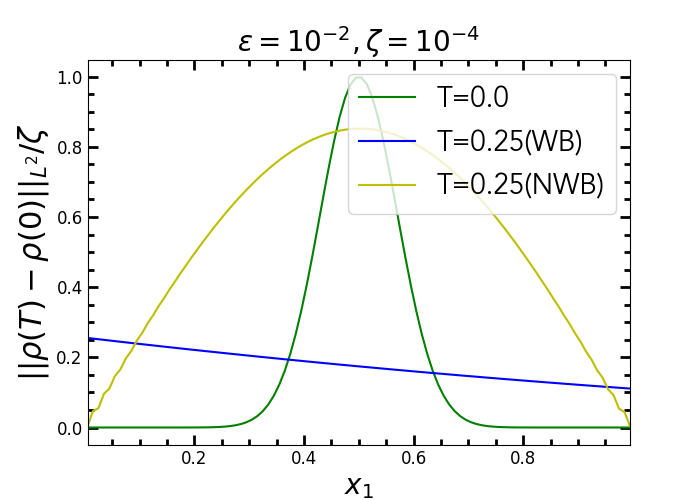} \qquad
\includegraphics[height=0.2\textheight]{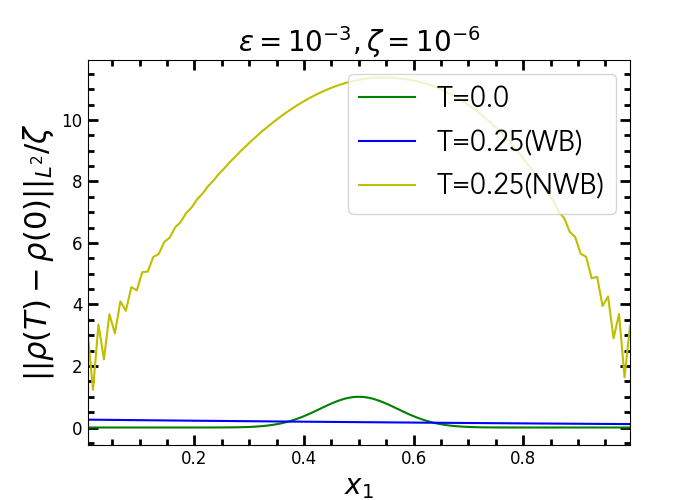}
  \vspace{-3mm}
    \caption{Comparison between a non-well-balanced AP and a well-balanced AP scheme with $\veps = 10^{-2}$ (left) and $\veps = 10^{-3}$ (right) for a perturbation with amplitude $\zeta = \veps^2$ of a steady state \cite{LB99, AKS22}.}
  \label{fig:WB_vs_NWB}
\end{figure}

Figure~\ref{fig:WB_vs_NWB} compareas the above AP but non-well-balanced (NWB) scheme and the AP-WB scheme developed in this work.
Initially, we add a small perturbation of amplitude $\zeta = \veps^2$ to the steady state equilibrium \eqref{eq:rho_eq} (see Section~\ref{ssec:hydro_stat}, Case~2, for details).
For $\veps = 10^{-2}$, though both schemes damp the initial perturbation, the WB scheme clearly outperforms the NWB scheme.
However, the NWB scheme gives abysmal results by amplifying the initial perturbation for $\veps = 10^{-3}$.
In contrast, the WB scheme still damps the initial perturbation as expected.
This showcases the need for well-balancing while deriving a spatial discretization.

In \cite{TPK20} the authors have presented an AP-WB scheme for Euler equations with gravity and an energy equation using the Suliciu relaxation method.
In \cite{BLY17} the authors have developed AP-WB schemes using IMEX methods for the perturbations of variables.
We tread a different path to design an AP-WB scheme: we combine a penalization approach \cite{DP13}, hydrostatic reconstruction of steady-state variables, and prudent source discretization \cite{Chandrashekar2015}.
The penalization relies on the derivation of a linearized balance against the non-linear balance equation \eqref{eq:hydro_bal_an}, but essentially having the same solution as \eqref{eq:rho_eq}.
This allows us to be linearly implicit, hence avoiding expensive Newton solvers.
The source pressure balance obtained following \cite{Chandrashekar2015} and a novel discretization of the linear balance yields an AP-WB first-order scheme in time.
Higher orders in time are achieved by using the additive IMEX framework \cite{PR01}.

The rest of the paper is organized as follows. Section~\ref{sec:time-discrete} is dedicated to the development and AP analysis of the first-order in time semi-discrete scheme, including the derivation of the linear balance and the penalized momentum equation.
In Section~\ref{sec:ho_tsd}, higher-order extensions are obtained using additive IMEX Runge-Kutta (RK) schemes.
Spatial discretization of the time-semi-discrete scheme, in particular, a hydrostatic reconstruction and appropriate source-term discretization combined with a second-order finite volume (FV) discretization, thus yielding an implementable fully-discrete scheme, is presented in Section~\ref{sec:fully-disc}.
Moreover, it is proved that the fully discrete scheme is well-balanced. In Section~\ref{sec:numerical_results}, results of numerical case studies are presented to corroborate the AP and WB property of the designed penalized IMEX-RK schemes.
Finally, we close the paper in Section~\ref{sec:conclusion} with some conclusions and future prospects.
\section{First-order time discretization and its analysis}
\label{sec:time-discrete}
To be asymptotic preserving in the limit of the Mach number going to zero, i.e., $\veps \to 0$, we follow the seminal works of \cite{DT11,ap_jin, HJL12, DP13, DLV17, BQR19, BLY17}. The idea is to introduce some amount of implicitness to the numerical scheme judiciously so that the resulting scheme is able to transition to a consistent discretization of the limit system. To achieve this, the requirement is to identify terms (specifically flux terms) which are to be treated implicitly. The problem is that all the flux terms in the momentum equation are non-linear and, therefore, will pose challenges in the form of iterative Newton solvers when treated implicitly. The next design feature is that the first-order scheme must allow a straightforward higher-order extension using the classical additive IMEX-RK framework.

This section is organized as follows. We begin with some necessary definitions, pivotal to the analysis of the schemes to be designed.
Following that we recognize a few terms that are introduced to the momentum balance equation to aid in developing the time discretization (without the need of Newton solvers); similar developments have been reported in \cite{DP13} for kinetic systems and for fluid systems in \cite{CDS26, SAM24}.
This leads to the presentation of the design of the first-order time semi-discrete scheme.
Finally, the section ends with AP analysis of the developed scheme.
\subsection{Well-prepared and ill-prepared initial data}
\begin{definition}[cf.~\cite{FKM19}]\label{def:wellprep_id}
    A tuple $(\rho, \vu)$ of solutions to the compressible Euler equations \eqref{eq:iseuler_mass_ane}--\eqref{eq:iseuler_mom_ane} is said to be well-prepared if the following decompositions hold:
    \begin{align}
        \rho &= \rho_{(0)} + \veps^2 \rho_{(2)} \label{eq:rho_md}\\
        \vu &= \vu_{(0)} + \veps \vu_{(1)} \label{eq:u_md}
    \end{align}
    where:
    \begin{equation}\label{eq:wp_inc}
        \rho_{(0)} = \rho_\mathrm{eq} \qquad \mbox{and} \qquad \dvg(\rho_{(0)} \vu_{(0)}) = 0.
    \end{equation}
\end{definition}
\begin{remark}
It has to be noted that although well-preparedness of the initial data is a sufficient condition for the solution of the compressible system to converge to the solution of the incompressible counterpart, it is not absolutely necessary.
\end{remark}
\begin{definition}[cf.~\cite{FKM19}]\label{def:illprep_id}
    A tuple $(\rho, \vu)$ of solutions to the compressible Euler equations \eqref{eq:iseuler_mass_ane}--\eqref{eq:iseuler_mom_ane} is said to be ill-prepared if the decompositions \eqref{eq:rho_md}--\eqref{eq:u_md} hold, but we only have that
    \begin{equation*}
        ||\rho_{(0)}||_{L^2}  \quad \mbox{and} \quad ||\dvg(\rho_{(0)} u_{(0)})||_{L^2} \quad \mbox{are bounded}
    \end{equation*}
    instead of \eqref{eq:wp_inc}.
\end{definition}
\begin{remark}
    In \cite{FKM19} the authors have shown that in spite of the ill-preparedness of the initial data for the isentropic Euler equation without gravitational source term, still the compressible solution at least locally converges to its incompressible counterpart in the limit of $\veps \to 0$. Therefore, it would be highly desirable for a numerical scheme to perform a similar transitory act.
\end{remark}
\subsection{Equilibrium reformulation}
The gradient of the equilibrium solution \eqref{eq:rho_eq} is
\begin{align} \label{eq:hydro_bal_lin}
    \nabla \rho_\mathrm{eq} = -\frac{1}{\gamma} \rho_\mathrm{eq}^{(2-\gamma)} \grd \phi.
\end{align}
Note that when $\veps \to 0$,  $ \rho \to \rho_\mathrm{eq}$. Using this and \eqref{eq:hydro_bal_lin}, we get
\begin{align*}
& \nabla \rho + \frac{1}{\gamma} \rho^{(2-\gamma)} \grd \phi \xrightarrow{\hspace{3mm}\veps \to 0 \hspace{3mm}} \nabla \rho_\mathrm{eq} + \frac{1}{\gamma} \rho^{(2-\gamma)}_\mathrm{eq} \grd \phi = 0.
\end{align*}
%\[ \nabla \rho + \frac{e_3}{1 -\gamma} \rho^{(1-\gamma)} = \nabla \rho + \frac{e_3}{1 -\gamma} \frac{\rho}{\rho^\gamma}\]
Assuming that $\rho_\mathrm{eq} > 0 $, which is a fair assumption since it is the density as $\veps \to 0$, we obtain
\begin{align*}
& \nabla \rho + \frac{1}{\gamma} \frac{\rho}{\rho_\mathrm{eq}^{(\gamma - 1)}} \nabla \phi   \xrightarrow{\hspace{3mm}\veps \to 0 \hspace{3mm}} \nabla \rho_\mathrm{eq} + \frac{1}{\gamma} \rho_\mathrm{eq}^{(2-\gamma)}\nabla \phi = 0.
\end{align*}
The following equality can be obtained and used in the above expressions to simplify them:
\[\frac{\nabla \phi}{\gamma \rho_\mathrm{eq}^{(\gamma - 1)}} = -\frac{\nabla\rho_\mathrm{eq}}{\rho_\mathrm{eq}}. \]
Note that $\rho_\mathrm{eq}$ is the unique solution to the non-linear problem \eqref{eq:hydro_bal_an} as well as for the linear problem
\begin{equation} \label{eq:lin_hydrobal}
    \nabla \rho - \frac{\rho}{\rho_\mathrm{eq}} \nabla \rho_\mathrm{eq} = 0.
\end{equation}
This linear balance \eqref{eq:lin_hydrobal} which has the same solution $\rho_\mathrm{eq}$ as the non-linear balance \eqref{eq:hydro_bal_an} up to a constant multiple  $K$ (to be fixed to 1 by boundary conditions) allows us to maintain the non-linear balance by just maintaining the linear one.
The idea is to utilize the linear hydrostatic balance \eqref{eq:lin_hydrobal} to obtain a stiff-non-stiff splitting leading to an IMEX scheme.
To this end, we follow \cite{SAM24} and reformulate the momentum balance equation \eqref{eq:iseuler_mom_ane} by adding and subtracting the balanced terms in \eqref{eq:lin_hydrobal} to obtain the momentum equation
\begin{equation}
    \Dt (\rho \vu ) + \grd \cdot (\rho \vu \otimes \vu ) + \frac{\grd p}
  {\veps^2}  + \frac{\rho \nabla \phi}{\veps^2} -  \frac{1}{\veps^2} \left(\nabla \rho - \frac{\rho}{\rho_\mathrm{eq}} \nabla \rho_\mathrm{eq} \right) + \frac{1}{\veps^2} \left(\nabla \rho - \frac{\rho}{\rho_\mathrm{eq}} \nabla \rho_\mathrm{eq}  \right) = 0.
\end{equation}
\begin{remark}
    The addition and subtraction of terms has two fundamental aims.
    The first and obvious one is to get linear terms associated to the non-linear balance \eqref{eq:hydro_bal_an} (to provide a choice of linear terms for implicit discretization).
    Secondly, discretizing the newly introduced cancelling terms at two different time levels (one explicit and the other implicit) introduces a numerical error. This penalizing error contributes toward stabilizing the scheme to tackle smaller values of Mach numbers. This in turn leads to the asymptotic preserving property for the time discretization.
\end{remark}
\subsection{First-Order Time Discretization}
The first-order accurate semi-implicit time discretization is obtained by treating the mass flux fully implicitly and making a judicious choice of splitting for the momentum fluxes and source terms.
\begin{definition}\label{def:1st_TSD}
    The first-order time semi-discrete scheme for the Euler equations \eqref{eq:iseuler_mass_ane}--\eqref{eq:iseuler_mom_ane} updates the solution $(\rho^n, \vq^n)$ at time $t^n$ to $(\rho^{n + 1}, \vq^{n+1})$ at time $t^{n+1} = t^n + \Delta t$ semi-implicitly as
    \begin{align}
         \frac{\rho^{n+1} - \rho^{n}}{\Dlt} + & \dvg \vq^{n+1} = 0, \label{eq:mass_fo}\\
         \frac{\vq^{n+1} - \vq^{n}}{\Dlt} + &\dvg \left(\frac{\vq^n \otimes \vq^n}{\rho^n}\right) + \frac{1}{\veps^2} \left( \grd p^n + \rho^n \nabla \phi \right) - \frac{1}{\veps^2} \left(\nabla \rho^n -\frac{\rho^n}{\rho_\mathrm{eq}} \nabla \rho_\mathrm{eq}\right) + \notag \\
         &  \frac{1}{\veps^2} \left(\nabla \rho^{n+1} - \frac{\rho^{n+1}}{\rho_\mathrm{eq}} \nabla \rho_\mathrm{eq} \right) = 0.\label{eq:mom_fo}
    \end{align}
\end{definition}
Following the approach of \cite{DT11} we obtain a reformulation of the above scheme. The numerical solution computed is the solution of this reformulation rather than the scheme in Definition~\ref{def:1st_TSD}. To this end, the momentum update \eqref{eq:mom_fo} can be written as
\begin{equation}\label{eq:mom_fo_wh}
    \vq^{n+1} = \hat{\vq}^{n+1} - \frac{\Dlt}{\veps^2} \left(\nabla \rho^{n+1} -\frac{\rho^{n+1}}{\rho_\mathrm{eq}} \nabla \rho_\mathrm{eq} \right),
\end{equation}
where $\hat{\vq}^{n+1}$ is the part of $\vq^{n+1}$ that can be updated explicitly, i.e.,
\begin{equation}\label{eq:qh_def}
    \hat{\vq}^{n+1} :=\vq^{n} -\Dlt \dvg \left(\frac{\vq^n \otimes \vq^n}{\rho^n}\right) - \frac{\Dlt}{\veps^2} \left( \grd p^n + \rho^n \nabla \phi \right) + \frac{\Dlt}{\veps^2} \left( \nabla \rho^n -\frac{\rho^n}{\rho_\mathrm{eq}} \nabla \rho_\mathrm{eq} \right).
\end{equation}
Substituting the expression for $\vq^{n+1}$ from \eqref{eq:mom_fo_wh} in the mass update \eqref{eq:mass_fo} we get the following elliptic problem for $\rho^{n+1}$:
\begin{equation}
    \rho^{n+1} = \rho^n - \Dlt \dvg \left(\hat{\vq}^{n+1} - \frac{\Dlt}{\veps^2} \left(\nabla \rho^{n+1} -\frac{\rho^{n+1}}{\rho_\mathrm{eq}} \nabla \rho_\mathrm{eq} \right) \right).
\end{equation}
\begin{definition} \label{def:1st_TSD_rf}
    The first-order reformulated time semi-discrete scheme for the Euler equations \eqref{eq:iseuler_mass_ane}--\eqref{eq:iseuler_mom_ane} updates the solution $(\rho^n, \vq^n)$ at time $t^n$ to $(\rho^{n + 1}, \vq^{n+1})$ at time $t^{n+1}$ by first evaluating $\hat{\vq}^{n+1}$ via \eqref{eq:qh_def}, followed by solving the ensuing linear elliptic problem, to update $\rho^{n+1}$,
    \begin{equation}\label{eq:elliptic_fo}
        -\frac{\Dlt^2}{\veps^2} \Delta \rho^{n+1} + \frac{\Dlt^2}{\veps^2} \dvg  \left(\frac{\rho^{n+1}}{\rho_\mathrm{eq}} \nabla \rho_\mathrm{eq} \right) + \rho^{n+1} = \rho^n - \Dlt \dvg \hat{\vq}^{n+1}.
    \end{equation}
    Once $\rho^{n+1}$ is obtained, the momentum $\vq^{n+1}$ is computed explicitly via \eqref{eq:mom_fo_wh}.
\end{definition}
\begin{algorithm}
\caption{First-Order Scheme: \quad $(\rho^n, \vq^n) \to (\rho^{n+1}, \vq^{n+1})$}
\begin{algorithmic}[1]
\STATE Compute $\hat{\vq}^{n+1}$ explicitly using \eqref{eq:qh_def}
\STATE Solve the elliptic problem \eqref{eq:elliptic_fo} for $\rho^{n+1}$
\STATE Update $\vq^{n+1}$ explicitly using \eqref{eq:mom_fo_wh}
\end{algorithmic}
\end{algorithm}
\begin{proposition}
    The first-order time semi-discrete scheme in Definition~\ref{def:1st_TSD} and the reformulated one in Definition~\ref{def:1st_TSD_rf} are equivalent.
\end{proposition}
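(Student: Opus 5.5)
We show that a pair $(\rho^{n+1},\vq^{n+1})$ solves \eqref{eq:mass_fo}--\eqref{eq:mom_fo} if and only if it is produced by the procedure of Definition~\ref{def:1st_TSD_rf}. Both directions are algebraic manipulations of the same identities. Throughout, $\hat{\vq}^{n+1}$ is fixed by \eqref{eq:qh_def}, since it depends only on the data $(\rho^n,\vq^n)$ at time $t^n$.

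\emph{From Definition~\ref{def:1st_TSD} to Definition~\ref{def:1st_TSD_rf}.} Multiply \eqref{eq:mom_fo} by $\Dlt$ and move every term evaluated at time level $n$ to the right-hand side. By \eqref{eq:qh_def}, this is exactly \eqref{eq:mom_fo_wh}. Next, substitute \eqref{eq:mom_fo_wh} into $\Dlt$ times \eqref{eq:mass_fo}, use linearity of the divergence, and write $\dvg\grd=\Delta$. This gives
\[
\rho^{n+1}-\frac{\Dlt^2}{\veps^2}\Delta\rho^{n+1}+\frac{\Dlt^2}{\veps^2}\dvg\left(\frac{\rho^{n+1}}{\rho_\mathrm{eq}}\nabla\rho_\mathrm{eq}\right)=\rho^n-\Dlt\dvg\hat{\vq}^{n+1},
\]
which is \eqref{eq:elliptic_fo}. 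Hence any solution of the first-order scheme satisfies the elliptic problem, and its momentum is given by \eqref{eq:mom_fo_wh}.

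\emph{From Definition~\ref{def:1st_TSD_rf} to Definition~\ref{def:1st_TSD}.} Let $\rho^{n+1}$ solve \eqref{eq:elliptic_fo}, and define $\vq^{n+1}$ by \eqref{eq:mom_fo_wh}. Inserting the definition \eqref{eq:qh_def} of $\hat{\vq}^{n+1}$ into \eqref{eq:mom_fo_wh} and dividing by $\Dlt$ recovers \eqref{eq:mom_fo}. Taking the divergence of \eqref{eq:mom_fo_wh} gives
\[
\dvg\vq^{n+1}=\dvg\hat{\vq}^{n+1}-\frac{\Dlt}{\veps^2}\dvg\left(\nabla\rho^{n+1}-\frac{\rho^{n+1}}{\rho_\mathrm{eq}}\nabla\rho_\mathrm{eq}\right).
\]
Multiplying by $\Dlt$ and comparing with \eqref{eq:elliptic_fo} yields $\rho^{n+1}-\rho^n=-\Dlt\dvg\vq^{n+1}$, which is \eqref{eq:mass_fo}.

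\emph{Main obstacle.} The algebra is routine. The one point needing care is that ``equivalent'' should mean the two definitions have the same solution sets, not merely that one implies the other. The two directions above establish exactly that, without any assumption on unique solvability of \eqref{eq:elliptic_fo}. If uniqueness is also wanted, I would assume $\rho_\mathrm{eq}>0$ is smooth and impose periodic or reflecting boundary conditions. Then the operator $\rho\mapsto\rho-\frac{\Dlt^2}{\veps^2}\dvg\bigl(\rho_\mathrm{eq}\nabla(\rho/\rho_\mathrm{eq})\bigr)$ is coercive in the weighted space $L^2(1/\rho_\mathrm{eq})$, so Lax--Milgram gives a unique solution. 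This is an addition beyond what the statement requires. The boundary conditions matter for one more reason: the spatial operators $\dvg$ and $\grd$ must be the same in both formulations, so the substitutions above hold with the same boundary conditions on both sides.
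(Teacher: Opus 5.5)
Your proof is correct and follows the same route as the paper, which obtains Definition~\ref{def:1st_TSD_rf} by rewriting \eqref{eq:mom_fo} as \eqref{eq:mom_fo_wh} with $\hat{\vq}^{n+1}$ from \eqref{eq:qh_def}, and then substituting this into \eqref{eq:mass_fo} to get \eqref{eq:elliptic_fo}. You also write out the converse direction (recovering \eqref{eq:mass_fo}--\eqref{eq:mom_fo} from a solution of \eqref{eq:elliptic_fo}), which the paper leaves implicit, so you establish equivalence of the solution sets.
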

\begin{remark}
    This reformulation technique was introduced in \cite{DT11}. It renders an equivalent scheme which is at least twice (depending on spatial dimensions) as efficient as the original semi-implicit scheme since only one matrix inversion is required to solve the elliptic problem for the density $\rho^{n+1}$, avoiding the need to take $\vq^{n+1}$ as an unknown for the implicit scheme and thus leading to a much smaller system matrix.
\end{remark}
\subsection{Asymptotic Consistency}
The AP property \cite{Jin12} of a numerical scheme is linked to two facets associated with any numerical discretization. The first is consistency: the ability to yield a consistent discretization of the limit system in the limit of the singular perturbation parameter going to zero, called the asymptotic consistency. The second one is stability: the ability of the scheme to remain stable independent of the singular perturbation parameter, called asymptotic stability. Here we study the consistency of the first-order time semi-discrete scheme for $\veps \to 0$.
\begin{proposition} \label{prop:ap_1st}
    Suppose that $(\rho^n, \vu^n)$ at time $t^n$ are well-prepared, i.e., they admit the decomposition in Definition~\ref{def:wellprep_id}. Then, if the solution $(\rho^{n + 1}, \vu^{n + 1})$ defined by the scheme in Definition~\ref{def:1st_TSD} admits the multi-scale decomposition \eqref{eq:rho_md}--\eqref{eq:u_md}, it is well-prepared as well. Further, the time semi-discrete scheme in Definition~\ref{def:1st_TSD} transforms to a consistent discretization of the incompressible limit system, in other words, the first-order time semi-discretization is weakly asymptotically consistent.
\end{proposition}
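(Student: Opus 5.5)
Plan: substitute the ansatz \eqref{eq:rho_md}--\eqref{eq:u_md} at both time levels into the scheme of Definition~\ref{def:1st_TSD}, expand the nonlinear terms in powers of $\veps$, and match orders in the momentum update \eqref{eq:mom_fo}. For time level $n$ we use $\rho^n_{(0)}=\rho_\mathrm{eq}$, $\rho^n_{(1)}=0$ and $\dvg(\rho_\mathrm{eq}\vu^n_{(0)})=0$. For time level $n+1$ we assume only $\rho^{n+1}=\rho^{n+1}_{(0)}+\veps^2\rho^{n+1}_{(2)}$ and $\vu^{n+1}=\vu^{n+1}_{(0)}+\veps\vu^{n+1}_{(1)}$, and must deduce $\rho^{n+1}_{(0)}=\rho_\mathrm{eq}$ and $\dvg(\rho_\mathrm{eq}\vu^{n+1}_{(0)})=0$.

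\textbf{Step 1 (order $\veps^{-2}$ of \eqref{eq:mom_fo}).} Expanding $p^n=P(\rho_\mathrm{eq})+\veps^2 P'(\rho_\mathrm{eq})\rho^n_{(2)}+\mco(\veps^4)$, the explicit terms at this order are
\[
\grd P(\rho_\mathrm{eq})+\rho_\mathrm{eq}\grd\phi-\Bigl(\grd\rho_\mathrm{eq}-\tfrac{\rho_\mathrm{eq}}{\rho_\mathrm{eq}}\grd\rho_\mathrm{eq}\Bigr).
\]
The first two terms cancel by \eqref{eq:hydro_bal_an} together with \eqref{eq:rho_eq}, and the bracket vanishes identically. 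The implicit bracket therefore leaves
\[
\grd\rho^{n+1}_{(0)}-\frac{\rho^{n+1}_{(0)}}{\rho_\mathrm{eq}}\grd\rho_\mathrm{eq}=0.
\]
This is exactly the linear balance \eqref{eq:lin_hydrobal}. Writing $\rho^{n+1}_{(0)}=K\rho_\mathrm{eq}$ (since $\grd(\rho^{n+1}_{(0)}/\rho_\mathrm{eq})=0$ with $\rho_\mathrm{eq}>0$), the constant $K$ must be fixed to $1$. On a periodic or reflecting domain I would do this through mass conservation: integrating \eqref{eq:mass_fo} over the domain kills $\dvg\vq^{n+1}$, so $\int\rho^{n+1}_{(0)}=\int\rho^n_{(0)}=\int\rho_\mathrm{eq}$, hence $K=1$. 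This is the point I expect to need the most care, since the linear balance determines $\rho_\mathrm{eq}$ only up to the multiple $K$, and the argument must appeal to the boundary conditions or to conservation rather than to the local equations alone.

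\textbf{Step 2 (order $\veps^{-1}$).} All explicit terms vanish because $\rho^n_{(1)}=0$. The implicit term gives the same homogeneous linear balance for $\rho^{n+1}_{(1)}$. The same argument, now with zero total mass at this order, gives $\rho^{n+1}_{(1)}=0$, which is consistent with the absorbed expansion in \eqref{eq:rho_md}.

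\textbf{Step 3 (mass equation at $\mco(1)$).} Inserting $\rho^{n+1}_{(0)}=\rho^n_{(0)}=\rho_\mathrm{eq}$ into \eqref{eq:mass_fo}, the time difference disappears at leading order, so $\dvg(\rho_\mathrm{eq}\vu^{n+1}_{(0)})=0$. Together with Step 1 this is precisely well-preparedness of the new state.

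\textbf{Step 4 (order $\veps^0$ of \eqref{eq:mom_fo}).} Collect the order-one terms. The terms of order $\veps^2$ inside the pressure and linear brackets give
\[
\grd\bigl(P'(\rho_\mathrm{eq})\rho^n_{(2)}\bigr)+\rho^n_{(2)}\grd\phi-\Bigl(\grd\rho^n_{(2)}-\tfrac{\rho^n_{(2)}}{\rho_\mathrm{eq}}\grd\rho_\mathrm{eq}\Bigr)+\Bigl(\grd\rho^{n+1}_{(2)}-\tfrac{\rho^{n+1}_{(2)}}{\rho_\mathrm{eq}}\grd\rho_\mathrm{eq}\Bigr).
\]
Adding the time difference and the explicit convective term $\dvg(\rho_\mathrm{eq}\vu^n_{(0)}\otimes\vu^n_{(0)})$ gives the semi-discrete momentum equation. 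Using $p_{(2)}=P'(\rho_\mathrm{eq})\rho_{(2)}$, this is
\[
\frac{\rho_\mathrm{eq}\vu^{n+1}_{(0)}-\rho_\mathrm{eq}\vu^n_{(0)}}{\Dlt}+\dvg(\rho_\mathrm{eq}\vu^n_{(0)}\otimes\vu^n_{(0)})+\grd p^n_{(2)}+\rho^n_{(2)}\grd\phi+\bigl(L\rho^{n+1}_{(2)}-L\rho^n_{(2)}\bigr)=0,
\]
where $L\rho:=\grd\rho-\rho\grd\rho_\mathrm{eq}/\rho_\mathrm{eq}$.

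The last bracket is a penalization difference $\mco(\Dlt)$ in time. It is therefore a consistent perturbation, and the whole system is a first-order time discretization of \eqref{eq:ane_lim_div}--\eqref{eq:ane_lim_mom} with the closure $p_{(2)}=\rho_{(2)}P'(\rho_{(0)})$. Alternatively, it can be read as a semi-implicit splitting of the pressure term $\grd p_{(2)}+\rho_{(2)}\grd\phi$, with $p_{(2)}$ acting as a Lagrange multiplier enforcing the divergence constraint of Step 3.

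This gives weak asymptotic consistency, with "weak" reflecting the assumed existence of the expansion at $t^{n+1}$.
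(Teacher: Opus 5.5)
Your proposal is correct and follows the paper's proof essentially step for step. The $\mco(\veps^{-2})$ momentum terms reduce to the linear balance for $\rho^{n+1}_{(0)}$, so $\rho^{n+1}_{(0)}=K\rho_\mathrm{eq}$. Integrating the mass update with periodic or no-flux boundary conditions forces $K=1$. The $\mco(1)$ mass update then gives $\dvg(\rho_\mathrm{eq}\vu^{n+1}_{(0)})=0$. Finally, the $\mco(1)$ momentum terms give the limit momentum equation up to the first-order penalization difference $L\rho^{n+1}_{(2)}-L\rho^{n}_{(2)}$.

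Your $\mco(\veps^{-1})$ step is harmless but redundant, since \eqref{eq:rho_md} already excludes a $\rho_{(1)}$ term. One terminological point: in the paper, ``weak'' asymptotic consistency refers to the need for well-prepared data at $t^n$, in contrast with the strong consistency of type-A schemes for ill-prepared data. It does not refer to the assumed existence of the expansion at $t^{n+1}$, which both notions presuppose.
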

\begin{proof}
    We start by stating the assumption of the multi-scale decomposition for $(\rho^{n+1}, \vu^{n+1})$, i.e.,
    \begin{equation}
        \rho^{n+1} = \rho_{(0)}^{n+1} + \veps^2 \rho_{(2)}^{n+1}, \qquad
        \vu^{n+1} = \vu_{(0)}^{n+1} + \veps \vu_{(1)}^{n+1}.
    \end{equation}
Plugging this into the momentum update \eqref{eq:mom_fo}, the $\mco (\veps^{-2})$ terms yield
\begin{equation}\label{eq:oepsm2_den}
    \left( \nabla p_{(0)}^n + \rho_{(0)}^n \nabla \phi \right)
    - \left( \nabla \rho^{n}_{(0)} -\frac{\rho^n_{(0)}}{\rho_\mathrm{eq}} \nabla \rho_\mathrm{eq} \right)
    + \left( \nabla \rho^{n+1}_{(0)} -\frac{\rho^{n+1}_{(0)}}{\rho_\mathrm{eq}} \nabla \rho_\mathrm{eq} \right)
    = 0.
\end{equation}
Since $\rho_{(0)}^n$ is well-prepared, i.e., $\rho_{(0)}^n$ satisfies \eqref{eq:hydro_bal_an}, the expression in the first bracket above is zero.
Similarly, the terms in the second bracket also vanish since $\rho^n$ is well-prepared, i.e., $\rho_{(0)}^n = \rho_\mathrm{eq}$ and by the equality in \eqref{eq:hydro_bal_lin}, the linearized hydrostatic balance.
Therefore, the above equation boils down to
\begin{equation*}
\nabla \rho^{n+1}_{(0)} -\frac{\rho^{n+1}_{(0)}}{\rho_\mathrm{eq}} \nabla \rho_\mathrm{eq} = 0,
\end{equation*}
which can be solved for $\rho^{n+1}_{(0)}$ uniquely up to a constant $K$ by imposing appropriate (periodic or no-flux) boundary conditions to obtain
\begin{equation}
    \rho^{n+1}_{(0)} = K \rho_\mathrm{eq}.
\end{equation}
Proceeding further by considering the $\mco (\veps^0)$ terms in the mass update \eqref{eq:mass_fo} we get
\begin{equation}\label{eq:mass_fo_o1}
    \frac{\rho_{(0)}^{n+1} - \rho_{(0)}^{n}}{\Dlt} = -\dvg (\rho_{(0)}^{n+1} \vu^{n+1}_{(0)}).
\end{equation}
Integrating over a domain $\Omega$, using Gauss' divergence theorem and imposing periodic or no-flux boundary conditions, the right-hand-side integral vanishes, yielding
\[\frac{1}{\Dlt}\int_{\Omega} K\rho_\mathrm{eq} -\rho_\mathrm{eq} = 0.\]
Thus, $K = 1$ or $\int_{\Omega} \rho_\mathrm{eq} = 0$; since the second is in general not true we conclude that $K = 1$, which gives that the density $\rho^{n+1}$ is well-prepared i.e. $\rho^{n+1}_{(0)} = \rho_\mathrm{eq}$.

Now using the above in the $\mco (\veps^0)$ terms of the mass update \eqref{eq:mass_fo_o1} we obtain the asymptotic divergence condition \eqref{eq:ane_lim_div} for time $t^{n+1}$ as
\begin{equation}
    \dvg (\rho_{(0)}^{n+1} \vu_{(0)}^{n+1}) = 0.
\end{equation}
Finally, considering $\mco(\veps^0)$ terms of the momentum update \eqref{eq:mom_fo} gives
\begin{equation}
\begin{aligned}
\frac{\vq^{n+1}_{(0)} - \vq^{n}_{(0)}}{\Dlt} + \dvg (\rho_{(0)}^n \vu_{(0)}^n \otimes \vu^n_{(0)}) \qquad &
\\
+ (\nabla p_{(2)}^n  + \rho_{(2)}^n \nabla \phi) - \nabla (\rho_{(2)}^{n} - \rho_{(2)}^{n+1}) &+ \frac{(\rho_{(2)}^n - \rho_{(2)}^{n+1})}{\rho_\mathrm{eq}}\nabla \rho_\mathrm{eq} =0.
\end{aligned}
\end{equation}
This is a consistent discretization of the limit equation \eqref{eq:ane_lim_mom}, where $\nabla p_{(2)}^n$ is the incompressible pressure term and $\rho_{(2)}^n \nabla \phi$ is the source term for the incompressible limit system. The extra terms, i.e., $\nabla (\rho_{(2)}^{n} - \rho_{(2)}^{n+1})$ and $ \frac{\rho_{(2)}^n - \rho_{(2)}^{n+1}}{\rho_\mathrm{eq}} \nabla \rho_\mathrm{eq}$ are, respectively, first-order discretizations of $\nabla(\rho_{(2)} - \rho_{(2)})$ and $\frac{(\rho_{(2)} - \rho_{(2)})}{\rho_\mathrm{eq}} \nabla \rho_\mathrm{eq}$, which are essentially zero terms (up to the order of the time integration scheme).
\end{proof}
\begin{remark}
    The above proof for asymptotic consistency requires well-prepared data $(\rho^n, \vu^n)$ at time $t^{n}$. This is called weak asymptotic consistency \cite{SAM24}.
\end{remark}
    Suppose the initial condition is not well-prepared (Definition~\ref{def:wellprep_id}).
    Now, considering $\mco(\veps^{-2})$ terms in \eqref{eq:mom_fo} yields \eqref{eq:oepsm2_den}.
    The lack of well-preparedness implies
    \[\nabla p_{(0)}^n + \rho_{(0)}^n \nabla\phi \neq 0, \qquad  \nabla \rho^n_{(0)} - \frac{\rho^n_{(0)}}{ \rho_\mathrm{eq}} \nabla \rho_\mathrm{eq} \neq 0.\]
    As a consequence, we do not have the well-preparedness for $\rho^{n+1}$.
    The inability of the first-order scheme to furnish a well-prepared density hinders the extraction of the divergence-free condition for the limiting velocity.
    Hence, the first-order time semi-discrete scheme in Definition~\ref{def:1st_TSD} is not strongly asymptotic preserving.
\begin{remark}
    For strongly asymptotic consistency, type-A IMEX-RK schemes \cite{DP13, KENNEDY2003139} have to be considered, as discussed in the next section.
\end{remark}
\section{Higher-order time discretization}
\label{sec:ho_tsd}
Next, we generalize the scheme to obtain higher-order time discretizations using the mathematical framework discussed in \cite{PR01, AscherRuuthSpiteri}.
This class of additively-split IMEX-RK schemes provides an uncomplicated, robust, and efficient base to design schemes that are uniformly stable with respect to fast-scale dynamics.
In the recent past, they have been extensively used  \cite{AS20,DP13,ap_jin,Jin12,PR01,SAM24} to obtain higher-order methods consistent with the limit of singular perturbation problems.
\subsection{Additively split IMEX-RK methods}
We restrict ourselves to a specific subclass of IMEX-RK schemes; namely, we rely on diagonally implicit (DIRK) schemes. This is a typical choice in many different contexts where computational cost is one of the main concerns. In general, an $s$-stage IMEX-RK scheme is characterized by the two $s\times s$ lower triangular matrices $\tilde{A} = (\tilde{a}_{i,j})$, and $A=(a_{i,j})$, the coefficients $\tilde{c}=(\tilde{c}_1,\tilde{c}_2,\ldots,\tilde{c}_s)$ and $c=(c_1, c_2, \ldots, c_s)$, and the weights $\tilde{\omega}=(\tilde{\omega}_1, \tilde{\omega}_2,
\ldots,\tilde{\omega}_s)$ and $\omega = (\omega_1, \omega_2, \ldots, \omega_s)$.
Here, for the DIRK class, the entries of $\tilde{A}$ and $A$ satisfy the conditions $\tilde{a}_{i,j}=0$ for $ j \geq i$, and $a_{i,j}=0$ for $j>i$. To introduce the method, let us consider the following general additively split stiff system of ODEs:
\begin{equation}
  \label{eq:stiff_ODE}
  y^\prime = f(t,y) + \frac{1}{\veps^2} g(t,y),
\end{equation}
where $0<\veps\ll 1$ is called the stiffness parameter. The functions $f$ and $g$ are known as, respectively, the non-stiff part and the stiff part of the system \eqref{eq:stiff_ODE} (see, e.g.\ \cite{HW96, Bos07}, for a comprehensive treatment of such systems). Let $y^n$ be a solution of \eqref{eq:stiff_ODE} at time $t^n$ and let $\Dlt$ denote a fixed time step. An $s$-stage IMEX-RK scheme updates $y^n$ to $y^{n+1}$ through the computation of $s$ intermediate stage updates:
\begin{equation}
  Y^{(i)} = y^n + \Dlt \sum\limits_{j=1}^{i-1}\tilde{a}_{i,j} f(t^n + \tilde{c}_j\Dlt, Y^{(j)}) +
        \Dlt\sum \limits_{j=1}^i a_{i,j} \frac{1}{\veps^2} g(t^n + c_j
        \Dlt,Y^{(j)}), \ 1 \leq i \leq s, \label{eq:imex_Yi}
\end{equation}
followed by the final update:
\begin{equation}
     y^{n+1} = y^n  + \Dlt \sum\limits_{i=1}^{s}\tilde{\omega}_{i} f(t^n
            + \tilde{c}_i\Dlt, Y^{(i)}) + \Dlt\sum \limits_{i=1}^s
            \omega_{i}\frac{1}{\veps^2} g(t^n +
            c_i\Dlt,Y^{(i)}), \label{eq:imex_yn+1}
\end{equation}
Typically IMEX-RK schemes are represented by a pair of Butcher tableaux
\begin{equation}
\label{eq:butcher_tableau}
    \begin{array}{c|c}
       \tilde{c} & \tilde{A} \\\hline
         & \tilde{\omega}^T
    \end{array}
    \qquad
    \begin{array}{c|c}
       c & A \\\hline
         & \omega^T
    \end{array}
\end{equation}
The values of the RK coefficients of the Butcher tableaux in \eqref{eq:butcher_tableau} for a particular IMEX-RK scheme satisfy the particular order conditions; see \cite{PR01}.

We consider the following two subcategories of IMEX-RK schemes, namely type-A and type-CK, which are defined below following \cite{boscarino, DP13, KENNEDY2003139}.
\begin{definition}
\label{eq:typeA_CK}
An IMEX-RK scheme is said to be of
\begin{itemize}
\item type-A, if the matrix $A$ is invertible;
\item type-CK, if the matrix $A \in \mbb{R}^{s \times s}, \ s \geq 2$,
  can be written as
$
  A =
  \begin{pmatrix}
    0 & 0 \\
    \alpha & A_{s-1 }
  \end{pmatrix},
$
where $\alpha \in \mbb{R}^{s-1} $ and $A_{s-1} \in \mbb{R}^{s-1 \times
s-1}$ is invertible.
\end{itemize}
\end{definition}
Another property which is crucial to obtain the AP property for non-linear problems is that the IMEX-RK scheme is globally stiffly accurate \cite{BR13}.
\begin{definition}
\label{eq:GSA}
An IMEX-RK scheme with the Butcher tableaux given in
\eqref{eq:butcher_tableau} is said to be globally stiffly
accurate (GSA), if
\begin{equation}
  \tilde{a}_{s,j} = \tilde{\omega}_{j}
  \quad \text{and} \quad
  a_{s,j} = \omega_{j}
  \qquad
  \text{for all } j = 1,
\ldots, s.
\end{equation}
\end{definition}
This property is enough to guarantee that the solution obtained from the last stage of the IMEX-RK method coincides with the numerical solution $y^{n+1}=Y^{(s)}$.
\subsection{Higher-order time discretization for the Euler equations with gravity}
To obtain higher-order schemes we consider all the terms in the first-order discretization with a superscript $n$ as the explicit/non-stiff part and the ones treated implicitly, denoted with a superscript of $n+1$, as the stiff part.
\begin{definition}\label{def:ho_TSD}
    The s-stage IMEX-RK scheme for the Euler equations \eqref{eq:iseuler_mass_ane}--\eqref{eq:iseuler_mom_ane} updates the solution $(\rho^n, \vq^n)$ at time $t^n$ to $(\rho^{n + 1}, \vq^{n+1})$ at time $t^{n+1} = t^n + \Delta t$ via  the following $s$ intermediate stages:
\begin{eqnarray}
\rho^{k} &=& \rho^{n} - \Dlt  \sum_{\ell = 1}^k a_{k,\ell} \dvg \vq^{\ell}, \label{eq:mass_ho_k} \\
\vq^{k} &=& \vq^{n} - \Dlt \sum_{\ell = 1}^{k-1} \tilde{a}_{k, \ell} \left(\dvg \left(\frac{\vq^{\ell} \otimes \vq^{\ell}}{\rho^{\ell}}\right) + \frac{1}{\veps^2} \left( \grd p^{\ell} + \rho^{\ell} \nabla \phi \right) \right.\nonumber\\
&&\left.- \frac{1}{\veps^2}\left(\nabla \rho^{\ell} - \frac{\rho^{\ell}}{\rho_\mathrm{eq}} \nabla \rho_\mathrm{eq}\right) \right) - \Dlt \sum_{\ell = 1}^{k} a_{k, \ell} \frac{1}{\veps^2} \left(\nabla \rho^{\ell} - \frac{\rho^{\ell}}{\rho_\mathrm{eq}} \nabla \rho_\mathrm{eq} \right). \label{eq:mom_ho_k}
\end{eqnarray}
where $k = 1,\ldots,s$. As we choose only GSA schemes the final update is the same as the update at the last $(s^{th})$ stage, i.e., the numerical solution at time $t^{n+1}$ is given by
\begin{equation}
    (\rho^{n+1}, \vq^{n+1}) = (\rho^{s}, \vq^{s}).
\end{equation}
\end{definition}
Analogous to the first-order scheme, we obtain an equivalent semi-discretization using an elliptic reformulation by writing the mass \eqref{eq:mass_ho_k} and momentum \eqref{eq:mom_ho_k} updates as
\begin{align}
    \rho^k &= \hat{\rho}^k - \Dlt a_{k,k} \dvg \vq^k, \label{eq:rho_k_ho_rec}\\
    \vq^k &= \hat{\vq}^k - \frac{ \Dlt a_{k,k}}{\veps^2} \left( \nabla \rho^{k} - \frac{\rho^k}{\rho_\mathrm{eq}} \nabla \rho_\mathrm{eq} \right), \label{eq:q_k_ho_rec}
\end{align}
where $\hat{\rho}^k$ and $\hat{\vq}^k$ are, respectively, the explicit mass and momentum updates, i.e.,
\begin{eqnarray}
\hat{\rho}^{k} &=& \rho^{n} - \Dlt  \sum_{\ell = 1}^{k-1} a_{k,\ell} \dvg \vq^{\ell}, \label{eq:mass_ho_khat} \\
\hat{\vq}^k &=& \vq^{n} - \Dlt \sum_{\ell = 1}^{k-1} \tilde{a}_{k, \ell} \left( \dvg \left(\frac{\vq^{\ell} \otimes \vq^{\ell}}{\rho^{\ell}}\right) + \frac{1}{\veps^2} \left( \grd p^{\ell} + \rho^{\ell} \nabla \phi \right) \right.\nonumber\\
&&\left.- \frac{1}{\veps^2}\left(\nabla \rho^{\ell} - \frac{\rho^{\ell}}{\rho_\mathrm{eq}} \nabla \rho_\mathrm{eq}\right) \right) - \Dlt \sum_{\ell = 1}^{k-1} a_{k, \ell} \frac{1}{\veps^2} \left(\nabla \rho^{\ell} - \frac{\rho^{\ell}}{\rho_\mathrm{eq}} \nabla \rho_\mathrm{eq} \right). \label{eq:mom_ho_khat}
\end{eqnarray}
Using \eqref{eq:q_k_ho_rec} in the mass equations \eqref{eq:rho_k_ho_rec} we obtain a linear elliptic problem for $\rho^{k}$:
\begin{equation}\label{eq:elliptic_ho}
    -\frac{\Dlt^2 a_{k,k}^2}{\veps^2} \Delta \rho^{k} +\frac{\Dlt^2 a_{k,k}^2}{\veps^2} \dvg  \left(\frac{\rho^{k}}{\rho_\mathrm{eq}} \nabla \rho_\mathrm{eq} \right) + \rho^{k} = \hat{\rho}^k - \Dlt a_{k,k} \dvg \hat{\vq}^{k}.
\end{equation}
\begin{definition}\label{def:ho_TSD_ref}
    The $s$-stage reformulated IMEX-RK time semi-discrete scheme for the Euler equations \eqref{eq:iseuler_mass_ane}--\eqref{eq:iseuler_mom_ane} updates the solution $(\rho^n, \vq^n)$ at time $t^n$ via $s$ intermediate stages, to $(\rho^{n+1}, \vq^{n+1})$ at time $t^{n+1}$.
    In stage $k$, $\hat{\rho}^k$ and $\hat{\vq}^k$ are obtained explicitly via \eqref{eq:mass_ho_khat} and \eqref{eq:mom_ho_khat}, respectively, followed by the solution of the elliptic problem \eqref{eq:elliptic_ho} for $\rho^k$. Finally, $\vq^k$ is updated explicitly via \eqref{eq:q_k_ho_rec}. The numerical solution $(\rho^{n+1}, \vq^{n+1})$ is the solution of the last $s^{th}$ stage (obtained by GSA imposition).
\end{definition}
\begin{proposition}
    The higher-order time semi-discrete scheme in Definition~\ref{def:ho_TSD} and the reformulated one in Definition~\ref{def:ho_TSD_ref} are equivalent.
\end{proposition}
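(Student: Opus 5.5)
We argue stage by stage, by induction on $k=1,\ldots,s$. The induction hypothesis is that both schemes have produced the same previous stages $(\rho^\ell,\vq^\ell)$ for $\ell<k$. Then the explicit quantities $\hat{\rho}^k$ and $\hat{\vq}^k$ in \eqref{eq:mass_ho_khat}--\eqref{eq:mom_ho_khat} are identical for both schemes, since they depend only on $\rho^n,\vq^n$ and those earlier stages. It therefore suffices to show, for fixed $k$, that the pair of stage equations \eqref{eq:mass_ho_k}--\eqref{eq:mom_ho_k} holds if and only if $\rho^k$ solves \eqref{eq:elliptic_ho} and $\vq^k$ is given by \eqref{eq:q_k_ho_rec}. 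Because both schemes are GSA and set $(\rho^{n+1},\vq^{n+1})=(\rho^s,\vq^s)$, the final updates then coincide as well.

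\emph{Forward direction.} Split off the $\ell=k$ terms in the sums of \eqref{eq:mass_ho_k}--\eqref{eq:mom_ho_k}. The explicit sum only runs to $k-1$ because of the DIRK structure $\tilde a_{k,\ell}=0$ for $\ell\ge k$. This rewrites the stage equations exactly as \eqref{eq:rho_k_ho_rec}--\eqref{eq:q_k_ho_rec}. Substituting \eqref{eq:q_k_ho_rec} into \eqref{eq:rho_k_ho_rec} and using linearity of the divergence, the term
\[
\frac{\Dlt a_{k,k}}{\veps^2}\dvg\Bigl(\nabla\rho^k-\frac{\rho^k}{\rho_\mathrm{eq}}\nabla\rho_\mathrm{eq}\Bigr)
\]
is multiplied by $-\Dlt a_{k,k}$. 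Moving it to the left-hand side gives precisely \eqref{eq:elliptic_ho}.

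\emph{Converse.} Let $\rho^k$ solve \eqref{eq:elliptic_ho} and define $\vq^k$ by \eqref{eq:q_k_ho_rec}. Then $\vq^k$ satisfies \eqref{eq:mom_ho_k} by construction. Computing $\hat{\rho}^k-\Dlt a_{k,k}\dvg\vq^k$ with this $\vq^k$ and comparing with \eqref{eq:elliptic_ho} shows that it equals $\rho^k$. This is \eqref{eq:rho_k_ho_rec}, and together with the definition \eqref{eq:mass_ho_khat} of $\hat\rho^k$ it is equivalent to \eqref{eq:mass_ho_k}.

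When $a_{k,k}=0$ (an explicit first stage in type-CK schemes), the elliptic problem reduces to $\rho^k=\hat\rho^k$. In that case both formulations are trivially the same explicit update.

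The only delicate point is that \emph{equivalence} also requires the sets of solutions to coincide, i.e.\ that the stage problem determines $\rho^k$ in the same way in both formulations. The argument above shows that solutions correspond bijectively via $\rho^k\mapsto(\rho^k,\vq^k)$, so neither formulation can have a solution the other lacks. If uniqueness is also wanted, we would invoke well-posedness of the linear operator $\rho\mapsto\rho-\tfrac{\Dlt^2a_{k,k}^2}{\veps^2}\dvg\bigl(\nabla\rho-\tfrac{\rho}{\rho_\mathrm{eq}}\nabla\rho_\mathrm{eq}\bigr)$ under periodic or no-flux boundary conditions with $\rho_\mathrm{eq}>0$. We would treat that well-posedness as an assumption rather than part of the algebraic equivalence, as is implicitly done for the first-order proposition.
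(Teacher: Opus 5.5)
Your proof is correct and follows the paper's route. The paper's justification is the derivation just before Definition~\ref{def:ho_TSD_ref}: split off the $\ell=k$ implicit terms to get \eqref{eq:rho_k_ho_rec}--\eqref{eq:q_k_ho_rec}, then substitute the momentum into the mass update to obtain \eqref{eq:elliptic_ho}; your stage-by-stage induction and explicit converse direction make rigorous what the paper leaves implicit.
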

\begin{proposition}
    The higher-order time semi-discrete scheme in Definition~\ref{def:ho_TSD} for a DIRK scheme is weakly asymptotically consistent with the asymptotic limit system in the limit of $\veps \to 0$.
\end{proposition}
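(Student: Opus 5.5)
Following the proof of Proposition~\ref{prop:ap_1st}, we argue by induction over the stages $k=1,\ldots,s$. The hypothesis is that $(\rho^n,\vu^n)$ is well-prepared, and that every stage value $(\rho^k,\vu^k)$ admits the multi-scale decomposition \eqref{eq:rho_md}--\eqref{eq:u_md}. The inductive claim is that $\rho^\ell_{(0)}=\rho_\mathrm{eq}$ and $\dvg(\rho^\ell_{(0)}\vu^\ell_{(0)})=0$ for all $\ell<k$, and we show the same holds at stage $k$. Because the scheme is GSA, $(\rho^{n+1},\vq^{n+1})=(\rho^s,\vq^s)$, so the claim for $k=s$ yields well-preparedness at $t^{n+1}$. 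It remains to read off the $\mco(\veps^0)$ momentum terms as a consistent discretization of \eqref{eq:ane_lim_mom}.

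\textbf{Leading order in the momentum stage.} First we collect the $\mco(\veps^{-2})$ terms in \eqref{eq:mom_ho_k}:
\[
\sum_{\ell=1}^{k-1}\tilde a_{k,\ell}\Big[(\grd p^\ell_{(0)}+\rho^\ell_{(0)}\grd\phi)-\Big(\grd\rho^\ell_{(0)}-\tfrac{\rho^\ell_{(0)}}{\rho_\mathrm{eq}}\grd\rho_\mathrm{eq}\Big)\Big]+\sum_{\ell=1}^{k}a_{k,\ell}\Big(\grd\rho^\ell_{(0)}-\tfrac{\rho^\ell_{(0)}}{\rho_\mathrm{eq}}\grd\rho_\mathrm{eq}\Big)=0.
\]
By the induction hypothesis and \eqref{eq:hydro_bal_an}, \eqref{eq:hydro_bal_lin}, all terms with $\ell<k$ vanish. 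This leaves
\[
a_{k,k}\Big(\grd\rho^k_{(0)}-\tfrac{\rho^k_{(0)}}{\rho_\mathrm{eq}}\grd\rho_\mathrm{eq}\Big)=0.
\]
For $a_{k,k}\neq0$ we conclude $\rho^k_{(0)}=K_k\rho_\mathrm{eq}$, as in the first-order proof.

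\textbf{Mass stage.} Next, the $\mco(\veps^0)$ part of \eqref{eq:mass_ho_k} gives $\rho^k_{(0)}=\rho_\mathrm{eq}-\Dlt\sum_{\ell\le k}a_{k,\ell}\dvg(\rho^\ell_{(0)}\vu^\ell_{(0)})$. Integrating over $\Omega$ with periodic or no-flux boundary conditions gives $(K_k-1)\int_\Omega\rho_\mathrm{eq}=0$, hence $K_k=1$. Substituting back, the terms with $\ell<k$ vanish by induction, so $a_{k,k}\dvg(\rho_\mathrm{eq}\vu^k_{(0)})=0$, which is the divergence constraint \eqref{eq:ane_lim_div} at the stage.

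\textbf{Main obstacle: $a_{k,k}=0$.} The step I expect to be hardest is the case of a vanishing diagonal entry, e.g.\ the first stage of a type-CK scheme with $a_{1,1}=0$. There I would observe that stage $1$ is then $Y^{(1)}=y^n$, i.e.\ $\rho^1=\rho^n$ and $\vq^1=\vq^n$. This stage is well-prepared directly by hypothesis, and this is precisely why the result is only \emph{weak} consistency. For type-A schemes all $a_{k,k}\neq0$, and the induction runs without exception.

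\textbf{Order $\veps^0$ in momentum.} Finally, the $\mco(\veps^0)$ terms of \eqref{eq:mom_ho_k} give
\[
\vq^k_{(0)}=\vq^n_{(0)}-\Dlt\sum_{\ell<k}\tilde a_{k,\ell}\big[\dvg(\rho_\mathrm{eq}\vu^\ell_{(0)}\otimes\vu^\ell_{(0)})+\grd p^\ell_{(2)}+\rho^\ell_{(2)}\grd\phi-\mathcal L\rho^\ell_{(2)}\big]-\Dlt\sum_{\ell\le k}a_{k,\ell}\mathcal L\rho^\ell_{(2)},
\]
where $\mathcal L r:=\grd r-\frac{r}{\rho_\mathrm{eq}}\grd\rho_\mathrm{eq}$. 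This is the IMEX-RK discretization (with tableaux $\tilde A$, $A$) of the limit momentum equation \eqref{eq:ane_lim_mom} augmented by the splitting $-\mathcal L\rho_{(2)}+\mathcal L\rho_{(2)}=0$. The additive order conditions of the pair make the penalization contributions cancel up to the order of the method. Together with the stagewise divergence constraint, this gives a consistent discretization of the anelastic limit system.
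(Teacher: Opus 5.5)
Your proposal is correct and follows the route the paper indicates in its remark. That route is a stage-by-stage induction that replays the first-order argument: the $\mco(\veps^{-2})$ momentum terms force $\rho^k_{(0)}=K_k\rho_\mathrm{eq}$, mass conservation fixes $K_k=1$ and gives the stagewise divergence constraint, GSA carries this to $t^{n+1}$, and the type-CK case $a_{1,1}=0$ is handled by $Y^{(1)}=y^n$. Your induction tacitly needs $a_{k,k}\neq 0$ for every $k\geq 2$; this holds for the type-A and type-CK tableaux the paper uses but not for an arbitrary DIRK pair, so state that restriction explicitly.
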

\begin{remark}
    The proof traces similar arguments as for the first-order scheme and employs induction along the lines of the proofs for asymptotic consistency in \cite{SAM24}. Note that the first-order IMEX-RK scheme can be deduced from the higher-order scheme in Definition~\ref{def:ho_TSD} by using the ARS(1,1,1) IMEX-RK scheme in \eqref{eq:ars_dp}.
\end{remark}
\begin{equation}\label{eq:ars_dp}
\begin{aligned}
    \begin{array}{c|c c}
    0 & 0 & 0 \\
    1 & 1 & 0\\
    \hline
      & 1	& 0
  \end{array}
  \hspace{10pt}
  \begin{array}{c|c c}
    0 & 0 & 0 \\
    1 & 0 & 1 \\
    \hline
        & 0 & 1
  \end{array}
  \qquad
  &
  \qquad
  \begin{array}{c|c c}
    0 & 0 & 0 \\
    1 & 1 & 0\\
    \hline
      & 1	& 0
  \end{array}
  \hspace{10pt}
  \begin{array}{c|c c}
    \beta & \beta	& 0 \\
    1	& 1 - \beta & \beta \\
    \hline
        & 1 - \beta & \beta
  \end{array} \\
  \mbox{ARS(1,1,1), type-CK, GSA  \cite{AscherRuuthSpiteri}} \qquad &  \mbox{DP-A(1,2,1), type-A, GSA   $\beta > 1/2$ \cite{DP13}}
  \end{aligned}
\end{equation}
\vspace{-3mm}
\begin{remark}
    A detailed linear stability analysis of the above scheme for the homogeneous problem, i.e., without gravity is carried out in \cite{AS20}. It has been shown that the higher-order time discretization is linearly $L^2$ stable under a hyperbolic CFL condition. The stability constraint is independent of the Mach number $\veps$ and only depends on the IMEX-RK coefficients.
\end{remark}
\subsection{Strong asymptotic consistency}
As stated in Section~\ref{sec:time-discrete}, the first-order scheme relies on well-prepared initial data to be asymptotically consistent. This deficiency or rather strict requirement on the initial data can be relaxed for type-A  schemes. To this end we show that for type-A schemes (e.g.  DP-A(1,2,1) in \eqref{eq:ars_dp}), asymptotic consistency of the general IMEX-RK scheme can be obtained after two time steps, even for ill-prepared initial data (Definition~\ref{def:illprep_id}).
\begin{proposition}
    Suppose $(\rho^n, \vu^n)$, the datum at time $t^n$ is ill-prepared (Definition~\ref{def:illprep_id}). Let the solution $(\rho^{n+1}, \vu^{n+1})$ at time $t^{n+1}$ admit the multiscale decomposition \eqref{eq:ansatz} for the $s$-stage
    IMEX-RK scheme in Definition~\ref{def:ho_TSD} with type-A tableaux. Then, the solution after two time steps, i.e., $(\rho^{n+2}, \vu^{n+2})$ is well-prepared and the scheme reduces to a consistent discretization of the anelastic limit system \eqref{eq:ane_lim_div}--\eqref{eq:ane_lim_mom}.
\end{proposition}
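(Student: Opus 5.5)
The argument runs in two passes: one time step to make the density well-prepared, and a second step where the weak asymptotic consistency argument applies. In both steps I will expand every stage value as $\rho^k = \rho^k_{(0)} + \veps \rho^k_{(1)} + \veps^2 \rho^k_{(2)}$ and $\vq^k = \vq^k_{(0)} + \veps \vq^k_{(1)} + \dots$. This follows the ansatz \eqref{eq:ansatz}, which the statement assumes for the output of the step and which I will also assume for the intermediate stages. I then collect powers of $\veps$ in \eqref{eq:mass_ho_k}--\eqref{eq:mom_ho_k}.

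\textbf{First step, $t^n \to t^{n+1}$.} The $\mco(\veps^{-2})$ terms of \eqref{eq:mom_ho_k} at stage $k$ give
\begin{equation*}
\sum_{\ell=1}^{k-1} \tilde a_{k,\ell}\, E^\ell - \sum_{\ell=1}^{k-1}\tilde a_{k,\ell}\, L^\ell + \sum_{\ell=1}^{k} a_{k,\ell}\, L^\ell = 0 .
\end{equation*}
Here $E^\ell := \nabla p(\rho^\ell_{(0)}) + \rho^\ell_{(0)}\nabla\phi$ and $L^\ell := \nabla\rho^\ell_{(0)} - \frac{\rho^\ell_{(0)}}{\rho_\mathrm{eq}}\nabla\rho_\mathrm{eq}$. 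The initial datum enters only through $\vq^n$ and $\rho^n$, which sit at $\mco(1)$. Therefore no $\mco(\veps^{-2})$ term involves the datum at $t^n$. This is exactly why ill-preparedness does not obstruct the argument, in contrast to the first-order scheme, where the explicit term is evaluated at $\rho^n$.

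I then proceed by induction on $k$. For $k=1$ the $\tilde a$-sums are empty, so $a_{1,1} L^1 = 0$. Since $A$ is invertible and lower triangular, $a_{1,1}\neq 0$, hence $L^1=0$. Solving this linear ODE with periodic or no-flux boundary conditions gives $\rho^1_{(0)} = K_1\rho_\mathrm{eq}$.

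For the inductive step, the main point to check is that $E^\ell$ vanishes once $\rho^\ell_{(0)} = K_\ell\rho_\mathrm{eq}$. This fails for the nonlinear balance if $K_\ell\neq 1$, because $\nabla p(K\rho_\mathrm{eq}) = K^\gamma\nabla p(\rho_\mathrm{eq})$ while the gravity term scales like $K$. So I must fix $K_\ell=1$ at each stage before moving on.

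To do this I use the $\mco(1)$ mass equation \eqref{eq:mass_ho_k}. Integrating it over $\Omega$ gives $\int_\Omega \rho^k_{(0)} = \int_\Omega \rho^n_{(0)}$ by the divergence theorem. This fixes $K_k$ to the single constant $K=\int\rho^n_{(0)}/\int\rho_\mathrm{eq}$. I will resolve this by interpreting ill-prepared data as having the correct mass, i.e.\ the constant $C_0$ in \eqref{eq:rho_eq} is chosen so that $\int\rho_\mathrm{eq} = \int \rho^n_{(0)}$. This is the same normalisation used implicitly in the proof of Proposition~\ref{prop:ap_1st}. With $K=1$ we get $E^\ell = 0$ and $L^\ell=0$ for all $\ell<k$, so the stage-$k$ relation reduces to $a_{k,k}L^k=0$. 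Since $a_{k,k}\ne0$, the induction closes.

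The $\mco(\veps^{-1})$ terms are handled identically: the resulting linear relation for $\rho^k_{(1)}$ forces $\rho^k_{(1)}\propto\rho_\mathrm{eq}$ with zero mean, so $\rho^k_{(1)}=0$. By the GSA property, $\rho^{n+1}_{(0)}=\rho^s_{(0)}=\rho_\mathrm{eq}$.

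Next come the velocity constraints. The $\mco(1)$ mass equations give, for every $k$, $\sum_{\ell\le k} a_{k,\ell}\dvg\vq^\ell_{(0)} = (\rho^n_{(0)}-\rho_\mathrm{eq})/\Dlt$. The right-hand side is the same for all $k$ but is in general nonzero, since the datum is ill-prepared. Inverting $A$ shows that the vector $(\dvg\vq^\ell_{(0)})_\ell$ equals $A^{-1}\mathbf{1}$ times that right-hand side. Hence $\dvg\vq^{n+1}_{(0)}$ need not vanish after one step, and the density is well-prepared while the velocity is not yet. This is why a second step is needed.

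\textbf{Second step, $t^{n+1}\to t^{n+2}$.} Now $\rho^{n+1}_{(0)}=\rho_\mathrm{eq}$. Repeating the argument above, all stages satisfy $\rho^k_{(0)}=\rho_\mathrm{eq}$, and the $\mco(1)$ mass equations become $\sum_\ell a_{k,\ell}\dvg\vq^\ell_{(0)}=0$ for all $k$. Invertibility of $A$ gives $\dvg(\rho_\mathrm{eq}\vu^k_{(0)})=0$ at every stage, in particular at $t^{n+2}$. So the solution at $t^{n+2}$ is well-prepared.

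Finally, the $\mco(1)$ momentum terms give the RK discretisation of \eqref{eq:ane_lim_mom}. The penalty terms appear as $(\tilde a - a)$-weighted combinations of $\nabla\rho^\ell_{(2)}$ and $\frac{\rho^\ell_{(2)}}{\rho_\mathrm{eq}}\nabla\rho_\mathrm{eq}$; as in Proposition~\ref{prop:ap_1st}, these are consistent approximations of zero.

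The main obstacle is the normalisation of the constant $K$ for ill-prepared data. The ill-prepared definition only bounds $\rho_{(0)}$ and does not fix its mass, so I must either make the mass-matching assumption explicit or define $\rho_\mathrm{eq}$ via the mass of the datum. A second point is that the ansatz has to be justified for the intermediate stages, not only for the final output.
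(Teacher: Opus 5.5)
Your proposal is correct and follows the paper's route. In the first step, the $\mco(\veps^{-2})$ momentum terms, $a_{k,k}\neq 0$ (type-A) and induction over the stages force $\rho^k_{(0)}=\rho_{\mathrm{eq}}$, and GSA carries this to $\rho^{n+1}_{(0)}$. In the second step, the $\mco(1)$ mass updates and the invertibility of $A$ (which the paper applies stage by stage as forward substitution) give $\dvg(\rho_{\mathrm{eq}}\vu^k_{(0)})=0$ at every stage.

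The one genuine addition is your handling of the constant $K$, which the paper skips by declaring $\rho^1_{(0)}$ well-prepared directly from the linear balance. You correctly note that exact mass conservation forces $K=\int_\Omega\rho^n_{(0)}/\int_\Omega\rho_{\mathrm{eq}}$. The normalization $\int_\Omega\rho^n_{(0)}=\int_\Omega\rho_{\mathrm{eq}}$ is therefore a necessary hypothesis that the paper uses tacitly; without it $E^\ell\neq 0$ and the stage induction breaks.
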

\begin{proof}
We divide the proof into two parts: first the iteration $n \to n+1$ and next the update $n+1 \to n+2$. We then apply induction on the number of intermediate stages $k$ to each iteration.
\\
\underline{\textbf{First time-step $t^n \to t^{n+1}$}} \\
\underline{$k = 1$}:
Equating terms associated with $\mco(\veps^{-2})$ in the momentum update \eqref{eq:mom_ho_k}, we get,
    \[ a_{1,1} \left(\nabla \rho^{1}_{(0)} -\frac{\rho^{1}_{(0)}}{\rho_\mathrm{eq}} \nabla \rho_\mathrm{eq}\right) = 0 \quad (\mbox{type-A} \implies a_{1,1} \neq 0).\]
    Therefore, the density $\rho^{1}_{(0)}$ becomes well-prepared, but since the density at the previous time level $\rho_{(0)}^n$ is not well-prepared, we don't have the divergence-free condition for $\vq_{(0)}^1$.\\
    \underline{$k > 1$}: Let us assume that for the stages $k = 1,\ldots, s-1 $ the density is well-prepared. Now, equating like powers of $\mco (\veps^{-2})$ terms in the momentum update for the $s^{th}$ stage and considering the well-preparedness for $\rho^{k}$, $k=1,\ldots, s-1$ we get:
    \[a_{s,s} \left(\nabla \rho^{s}_{(0)} -\frac{\rho^{s}_{(0)}}{\rho_\mathrm{eq}} \nabla \rho_\mathrm{eq}\right) = 0 \quad (\mbox{type-A} \implies a_{s,s} \neq 0).\]
    Since the scheme is GSA we have $\rho^{n+1}_{(0)} = \rho^s_{(0)} = \rho_\mathrm{eq}$. But, we don't have the divergence-free condition for $\vq^{n+1}$. \\
    \underline{Second time-step $t^{n+1} \to t^{n+2}$} \\
    As obtained for the last iteration, analogously the intermediate densities are well-prepared, i.e.,
    \[\nabla \rho^{k}_{(0)} -\frac{\rho^{k}_{(0)}}{\rho_\mathrm{eq}} \nabla \rho_\mathrm{eq} = 0 , \quad \mbox{for } k = 1,\ldots s.
    \]
    \underline{$k=1$:} Considering the $\mco(\veps^0)$ terms in the mass update \eqref{eq:mass_ho_k}
    \[\frac{\rho_{(0)}^{1} - \rho_{(0)}^{n+1}}{\Dlt} = -a_{1,1} \dvg (\rho_{(0)}^1 \vu_{(0)}^1).\]
    Since $\rho^{n+1}$ is well-prepared $\rho_{(0)}^{1} = \rho_{(0)}^{n+1}$, and $a_{1,1} \neq 0$ (type-A), consequently we get
    \[\dvg (\rho_{(0)}^1 \vu_{(0)}^1) = 0.\]
    \underline{$k>1$:} Let us assume that for the stages $k = 1,\dots, s-1$ the following conditions hold true
    \[ \rho_{(0)}^k = \rho_{\mathrm{eq}} , \quad \mbox{and} \quad \dvg (\rho_{(0)}^k \vu_{(0)}^k) = 0.\]
    Now,  the $\mco(\veps^0)$ terms in the mass update \eqref{eq:mass_ho_k} for $k = s$ give:
     \[\frac{\rho_{(0)}^{s} - \rho_{(0)}^{n+1}}{\Dlt} = -a_{s,s} \dvg (\rho_{(0)}^s \vu_{(0)}^s), \quad \mbox{as} \  \dvg (\rho_{(0)}^k \vu_{(0)}^k) = 0, k =1, \ldots s-1. \]
     Using the same arguments as above and the fact that for a type-A scheme $a_{s,s} \neq 0$ we get the divergence condition $\dvg (\rho_{(0)}^s \vu_{(0)}^s) = 0$.
\end{proof}
\begin{remark}
    The above proof fails for type-CK schemes in general as $a_{11} = 0$ for this type, which hinders the projection of $\rho^1$ to the well-prepared space, i.e., the space of solutions of \eqref{eq:hydro_bal_an}.
\end{remark}\section{Space-time fully discrete scheme and its analysis}
\label{sec:fully-disc}
To obtain an implementable numerical scheme the spatial derivatives of the higher-order time semi-discrete scheme (Definition~\ref{def:ho_TSD}) need to be approximated. As understood from the scheme presented in the Introduction (Section~\ref{sec:intro}), much caution needs to be exercised to maintain the hydrostatic balance \eqref{eq:hydro_bal_an} also for non-zero $\veps$. In this section we present a second-order well-balanced FV discretization of the AP time semi-discrete scheme (Definition~\ref{def:ho_TSD}).

To this end we consider the following 1D form of the Euler equations \eqref{eq:iseuler_mass_ane}--\eqref{eq:iseuler_mom_ane},
\begin{equation}\label{eq:1d_isen_eul_pen}
    \Dt U + \pdxo \begin{pmatrix}
      0 \\
      F_2 (U)
    \end{pmatrix}+ \begin{pmatrix}
        0 \\
        \frac{S(U)}{\veps^2}
    \end{pmatrix} - \pdxo
    \begin{pmatrix}
        0 \\
        \frac{G_2 (U)}{\veps^2}
    \end{pmatrix} +
    \begin{pmatrix}
        0 \\
        \frac{\tilde{S}(U)}{\veps^2}
    \end{pmatrix} +
    \pdxo
    \begin{pmatrix}
        G_1(U) \\
        \frac{G_2(U)}{\veps^2}
    \end{pmatrix} -
    \begin{pmatrix}
        0 \\
        \frac{\tilde{S}(U)}{\veps^2}
    \end{pmatrix} = 0,
\end{equation}
\begin{align*}
U:= (\rho, q)^T, \ \ &F_2(U) := \frac{q^2}{\rho} + \frac{p}{\veps^2}, \ S(U):= \rho \pdxo \phi, \ \tilde{S}(U) := \frac{\rho}{\rho_{\mathrm{eq}}} \pdxo \rho_\mathrm{eq}, \\
&G_1(U) := q \mbox{ and }  \ G_2 (U) := \rho.
\end{align*}
Note that due to the addition and subtraction of the linear balance to the momentum two different types of sources are required to be discretized:
\begin{enumerate}
    \item $S(U)$: termed the gravitational source, and
    \item $\tilde{S}(U)$ : termed the penalization source.
\end{enumerate}
Here, we specify the scheme in 1D; it can be extended dimension by dimension to higher dimensions. Let $\Omega$ be the closed interval $[a,b]$, discretized via Cartesian mesh cells of lengths $\Delta x_1$ in $x_1$-direction.
For notational convenience, we introduce the finite difference operators $\delta$ and $\mu$, defined for any grid function $\omega_i$ as
\begin{equation}
    \delta_{x_1} \omega_{i} = \omega_{i + \frac{1}{2}} - \omega_{i - \frac{1}{2}}, \qquad \mu_{x_1} \omega_{i} = (\omega_{i + \frac{1}{2}} + \omega_{i - \frac{1}{2}}) / 2.
\end{equation}

\subsection{Gravitational source term discretization}
Following the approach introduced in \cite{Chandrashekar2017}, we first rewrite the gravitational source term $S(U)$ in \eqref{eq:1d_isen_eul_pen} as
\begin{equation}
    \rho \pdxo\phi= -p e^{-H(x_1)} \partial_x e^{H(x_1)},
    \;\; \mbox{where} \;
    H(x_1) = \frac{\gamma}{\gamma-1} \log \left (\frac{\gamma -1}{\gamma} \left (\beta - \phi(x_1) \right )\right ).
    \label{eq:source_transform}
\end{equation}
Here, $\beta = \frac{\gamma}{\gamma-1}\rho_\mathrm{eq}^{\gamma-1} + \phi$.
As a consequence of the above formulation, the gravitational source term on a finite volume cell $\Omega_i = [x_{1,i-\frac{1}{2}}, x_{1,i+\frac{1}{2}}]$ is approximated as
\begin{equation}
    S (U_i) \approx \mathcal{S}_i := -p_i e^{-H_i} \left (\frac{e^{H_{i+1/2}} - e^{H_{i-1/2}}}{\Delta x_1} \right ), \ \ \mbox{where} \ \ H_{i+1/2} = H(x_{1,i+1/2}),
    \label{eq:grav_sour_disc}
\end{equation}
and $p_i$ is a cell-centered approximation of the pressure.
\begin{lemma}
The source term discretization \eqref{eq:grav_sour_disc} is a second-order approximation of $\rho_i \partial_{x} \phi(x_i)$.
\end{lemma}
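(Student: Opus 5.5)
Taylor expansion of the smooth function $e^{H}$ about the cell centre $x_{1,i}$ should suffice, followed by the exact identity \eqref{eq:source_transform} evaluated at that point. We assume that $\phi$ is smooth and that $\beta-\phi>0$ on $\Omega$, so that $H$ and $e^{H}$ are smooth. We also take $x_{1,i}$ to be the midpoint of $\Omega_i$, and $p_i=p(\rho_i)$ with $\rho_i$ the point value (or a second-order approximation of it) at $x_{1,i}$.

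First, expand $e^{H(x_{1,i}\pm\Delta x_1/2)}$ in a Taylor series about $x_{1,i}$. The even-order terms cancel in the difference, which gives
\[
\frac{e^{H_{i+1/2}}-e^{H_{i-1/2}}}{\Delta x_1} = \partial_{x_1} e^{H}(x_{1,i}) + \frac{\Delta x_1^2}{24}\,\partial_{x_1}^3 e^{H}(\xi_i),
\]
so the central difference is second-order accurate. Next, multiply by $-p_i e^{-H_i}$, where $H_i=H(x_{1,i})$ is bounded independently of $\Delta x_1$. This yields
\[
\mathcal{S}_i = -p_i e^{-H_i}\partial_{x_1}e^{H}(x_{1,i}) + \mathcal{O}(\Delta x_1^2) = -p_i\,\partial_{x_1}H(x_{1,i}) + \mathcal{O}(\Delta x_1^2).
\]

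The remaining step is to check the identity \eqref{eq:source_transform} pointwise, i.e.\ that $-p\,\partial_{x_1}H = \rho\,\partial_{x_1}\phi$. Differentiating $H$ gives $\partial_{x_1}H = -\frac{\gamma}{\gamma-1}\,\frac{\partial_{x_1}\phi}{\beta-\phi}$. Writing $\frac{\gamma-1}{\gamma}(\beta-\phi)=\rho_\mathrm{eq}^{\gamma-1}$, this becomes $\partial_{x_1}H = -\partial_{x_1}\phi/\rho_\mathrm{eq}^{\gamma-1}$, and hence
\[
-p\,\partial_{x_1}H = \rho^{\gamma}\,\rho_\mathrm{eq}^{1-\gamma}\,\partial_{x_1}\phi .
\]
This equals $\rho\,\partial_{x_1}\phi$ exactly at hydrostatic equilibrium $\rho=\rho_\mathrm{eq}$. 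This is the main obstacle. For general $\rho$ the identity holds only in the sense in which the paper uses it: in the well-prepared regime $\rho=\rho_\mathrm{eq}+\mathcal{O}(\veps^2)$ it reproduces the source, and in any case it is the consistent discretization of the reformulated source $-p e^{-H}\partial_{x_1}e^{H}$.

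I would therefore state the lemma as a second-order approximation of the transformed source $-p e^{-H}\partial_{x_1} e^{H}$ at $x_{1,i}$. Combined with \eqref{eq:source_transform}, this gives second-order consistency with $\rho_i\partial_{x_1}\phi(x_{1,i})$ whenever that transformation is exact. If $p_i$ is itself only a second-order approximation of the point pressure, its error $\mathcal{O}(\Delta x_1^2)$ is multiplied by a bounded factor, so the overall order is unaffected.
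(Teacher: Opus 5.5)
Your argument follows the paper's proof: Taylor-expand the central difference of $e^{H}$ about $x_{1,i}$, then invoke \eqref{eq:source_transform}. (The paper expands $(\beta-\phi)^{\gamma/(\gamma-1)}$, which is $e^{H}$ up to a constant factor.) Your caveat is correct, and it is a defect of the statement, not of your proof. Since $\beta=\frac{\gamma}{\gamma-1}C_0$ is constant, $e^{H}=\rho_\mathrm{eq}^{\gamma}$, and hence
\[
\mathcal{S}_i = \rho_i\left(\frac{\rho_i}{\rho_{\mathrm{eq},i}}\right)^{\gamma-1}\partial_{x_1}\phi(x_{1,i}) + \mco(\Delta x_1^2).
\]
This differs from $\rho_i\partial_{x_1}\phi(x_{1,i})$ by $\rho_i\left[(\rho_i/\rho_{\mathrm{eq},i})^{\gamma-1}-1\right]\partial_{x_1}\phi(x_{1,i})$, which does not vanish as $\Delta x_1\to 0$ unless $\rho_i=\rho_{\mathrm{eq},i}$.

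The paper's proof reaches $\rho_i\partial_x\phi_i$ only because its rewriting \eqref{eq:source_2ndo} is not an identity. It trades the prefactor $p_ie^{-H_i}=(\rho_i/\rho_{\mathrm{eq},i})^{\gamma}$ for a multiple of $p_i^{1/(\gamma-1)}$, and it carries an extra factor $\frac{\gamma-1}{\gamma}$ that $\mathcal{S}_i$ does not have. Your restated lemma is what can actually be proved: second-order consistency with $-pe^{-H}\partial_{x_1}e^{H}$, which equals $\rho\,\partial_{x_1}\phi$ exactly at $\rho=\rho_\mathrm{eq}$.

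You should drop the appeal to the well-prepared regime. With $\rho=\rho_\mathrm{eq}+\veps^2\rho_{(2)}$, the mismatch is $(\gamma-1)\veps^2\rho_{(2)}\partial_{x_1}\phi+\mco(\veps^4)$. However, the source enters the momentum update as $\mathcal{S}_i/\veps^2$. The mismatch therefore becomes an $\mco(1)$ change in the limit momentum equation: the source $-\rho_{(2)}\nabla\phi$ in \eqref{eq:ane_lim_mom} would be replaced by $-\gamma\rho_{(2)}\nabla\phi$. So the smallness of $\rho-\rho_\mathrm{eq}$ does not rescue consistency with $\rho\,\partial_{x_1}\phi$ at the order that determines the anelastic limit.
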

\begin{proof}
    The expression in \eqref{eq:grav_sour_disc} can be rewritten using
    \begin{equation} \label{eq:source_2ndo}
        \frac{(\gamma - 1)}{\gamma}p_i e^{-H_i} \left (\frac{e^{H_{i+\frac{1}{2}}} - e^{H_{i-\frac{1}{2}}}}{\Delta x_1} \right )
        =
        \frac{\gamma-1}{\gamma} p_i^{\frac{1}{\gamma-1}} \left ( \frac{ (\beta -\phi_{i+\frac{1}{2}})^{\frac{\gamma}{\gamma-1}} - (\beta - \phi_{i-\frac{1}{2}})^{\frac{\gamma}{\gamma-1}}}{\Delta x_1} \right ),
    \end{equation}
    where $\phi_{i+\frac{1}{2}} = \phi(x_{i+\frac{1}{2}})$. Now, plugging in the Taylor series expansion around $x_i$ for each of the terms in the above expression, we get
    \begin{equation*}
        \frac{ (\beta - \phi_{i+\frac{1}{2}})^{\frac{\gamma}{\gamma-1}} - (\beta - \phi_{i-\frac{1}{2}} )^{\frac{\gamma}{\gamma-1}}}{\Delta x_1} = -\frac{\gamma}{\gamma-1} (\beta -\phi_i)^{\frac{1}{\gamma-1}} \pdxo \phi_i + \mco(\Delta x_1^2).
    \end{equation*}
    Substituting the above expression in \eqref{eq:source_2ndo} we obtain
    \begin{equation*}
        -\frac{\gamma - 1}{\gamma}p_i e^{-H_i} \left (\frac{e^{H_{i+\frac{1}{2}}} - e^{H_{i-\frac{1}{2}}}}{\Delta x_1} \right ) = \rho_i \partial_x \phi_i + \mco(\Delta x_1^2).
    %    \qedhere
    \end{equation*}
\end{proof}
\subsection{Balance preserving variables}
\label{sec:bal-pres_var}
To obtain a second-order FV discretization while maintaining the steady state, caution needs to be exercised during reconstruction. With this goal, we notice that at a steady state where $u = 0$ we have
\begin{equation}
    p e^{-H} = \text{const}.
\end{equation}
Therefore, to maintain the steady state, we reconstruct the variable
\begin{equation}\label{eq:steady_state_var}
    \mathbf{\omega} = (pe^{-H}, u).
\end{equation}
The primitive variable $\rho$ can be obtained by first obtaining the pressure as
\begin{equation}
    p_{i+\frac{1}{2}}^- = e^{H_{i+\frac{1}{2}}} w_{1,i+\frac{1}{2}}^-,  \quad  p_{i+\frac{1}{2}}^+ = e^{H_{i+\frac{1}{2}}} w_{1,i+\frac{1}{2}}^+,
\end{equation}
followed by
\begin{equation}
    \varrho_{i+\frac{1}{2}}^- = (p_{i+\frac{1}{2}}^-)^{1/\gamma}, \quad \varrho_{i+\frac{1}{2}}^+ = (p_{i+\frac{1}{2}}^+ )^{1/\gamma}.
\end{equation}
\begin{remark}
    Note that a polynomial reconstruction of the variable in \eqref{eq:steady_state_var} preserves the steady state.
\end{remark}
\subsection{Finite volume scheme}
The fully-discrete higher-order FV scheme complementing the time semi-discrete AP scheme in Definition~\ref{def:ho_TSD} is obtained by combining a reconstruction of steady-state variables \eqref{eq:steady_state_var}, a Rusanov-type flux for the explicit part, central differences for the implicit part, and balance-preserving source term discretizations.
Next, we state the fully discrete scheme complementary to the semi-discrete scheme in Definition~\ref{def:ho_TSD} for the 1D penalized Euler equations \eqref{eq:1d_isen_eul_pen}.
\begin{definition}\label{def:fully_disc_AP}
On a finite volume cell $\Omega_i = [x_{1,i-1/2}, x_{1,i+1/2}]$ the $k^{th}$ stage of the $s$-stage fully discrete IMEX-RK scheme first computes the following variables explicitly:
\begin{align}
    \hat{\rho}^k_i = \rho^n_i &- \sum_{\ell = 1}^{k-1} a_{k, \ell} \nu_{x_1} \delta_{x_1} \mathcal{G}_{1,i}^{\ell},\\
    \hat{q}^k_i = q^n_i &- \sum_{\ell = 1}^{k-1} \tilde{a}_{k, \ell} \left(\nu_{x_1} \delta_{x_1}  \Big(\mathcal{F}_{2,i}^{\ell} - \frac{\mathcal{G}_{2,i}^\ell}{\veps^2} \Big) + \frac{\Delta t}{\veps^2} (\mathcal{S}_i^\ell + \mathcal{\tilde{S}}_i^\ell)\right) \notag \\
    & - \sum_{\ell = 1}^{k-1} \frac{a_{k,\ell}}{\veps^2} \left(\nu_{x_1} \delta_{x_1} \mathcal{G}_{2,i}^\ell - \Delta t \mathcal{\tilde{S}}^\ell_i\right).
\end{align}
Next, we solve the discrete linear elliptic problem for $\rho_i^k$:
\begin{equation}\label{eq:elliptic_fd}
    -\frac{\nu_{x_1}^2 a_{k,k}^2}{\veps^2} \delta_{x_1} \left(\delta_{x_1} \rho_i^{k}\right) + \frac{\nu_{x_1}^2a_{k,k}^2}{\veps^2} \delta_{x_1} \tilde{\mcal{S}}^{k}_i + \rho^{k}_i = \hat{\rho}^k_i - a_{k,k}\nu_{x_1}  \delta_{x_1} \hat{q}^{k}_i.
\end{equation}
Finally, the momentum at stage $k$, $q_i^k$, is updated explicitly as
\begin{equation}
    q_i^k = \hat{q}_i^k - \frac{a_{k,k}}{\veps^2} \left(\nu_{x_1} \delta_{x_1} \mathcal{G}_{2,i}^k - \Delta t \tilde{\mathcal{S}}_i^{k} \right).
\end{equation}
Here $\nu_{x_1} = \frac{\Delta t}{\Delta x_1}$ is the mesh ratio, and $\mathcal{F}_{2}$, $\mathcal{G}_{1}$, $\mathcal{G}_{2}$ and $\mathcal{S}$, $\mcst$ are respectively the numerical fluxes and the source term approximations defined as follows:
\begin{equation}\label{eq:num_flux}
    \begin{aligned}
        &\mathcal{F}_{2, i+1/2}^\ell := \frac{1}{2} \left(F_2(U^{\ell,+}_{i+1/2}) + F_2(U^{\ell,-}_{i+1/2})\right) -  \frac{\alpha^\ell_{i+1/2}}{2} \left(q_{i+1/2}^{\ell,+} - q_{i+1/2}^{\ell,-}\right),\\
        &\mathcal{G}_{1,i+1/2}^{\ell} := \frac{1}{2}(q^{\ell}_{i+1} + q^{\ell}_{i}), \qquad  \qquad  \qquad
        \mathcal{G}_{2,i+1/2}^{\ell} := \frac{1}{2}(\rho_i^\ell + \rho_{i+1}^\ell),
    \end{aligned}
\end{equation}
\begin{equation}\label{eq:num_source}
\mcal{S}^{\ell}_i:= -p^{\ell}_i e^{-H_i} \left (\frac{e^{H_{i+1/2}} - e^{H_{i-1/2}}}{\Delta x_1} \right ),
    \qquad \mcst_i^{\ell} := \frac{\rho_i^{\ell}}{\rho_{\mathrm{eq},i}} \frac{(\rho_{\mathrm{eq},i+1}-\rho_{\mathrm{eq},i-1})}{2 \Delta x_1}.
\end{equation}
Here for any variable $w$, we have denoted by $w_{i + 1/2}^{\pm}$ the interpolated states obtained using the piecewise linear reconstructions at the cell boundary $x_{i+1/2}$. The wave speeds of the explicit subsystem in the $x_1$-direction are
\begin{equation}
  \label{eq:wave_speed1}
  \alpha^{\ell}_{i
    +\frac{1}{2}}:=2\max\left(\left|q_{i+\frac{1}{2}}^{\ell,-}\big/\rho_{i+\frac{1}{2}}^{\ell,-}\right|,
    \left|q_{i+\frac{1}{2}}^{\ell,+}\big/\rho_{i+\frac{1}{2}}^{\ell,+}\right|\right).
\end{equation}
The momentum flux $F_2$ is approximated by a Rusanov-type scheme whereas the mass flux is approximated using central differences.
The eigenvalues of the Jacobian of the flux which is approximated by a Rusanov-type flux are $\lambda_{1}=0$,  $\lambda_{2}= 2 \frac{q}{\rho},$ and the CFL condition at time $t^n$ is given by
\begin{equation}
\label{eq:ep_CFL}
\Dlt\max_{i}\left(\left|\lambda_{2,i}\right|\big/\Delta x_1\right)=\nu,
\end{equation}
where $\nu<1$ is the given CFL number; see \cite{GP16} for details about the choice of the CFL conditions.
\end{definition}
\begin{remark}
    The reconstruction is first carried out for the hydrostatic variable $pe^{-H}$ and is then transformed to obtain the reconstruction of $\rho$; see Section~\ref{sec:bal-pres_var}.
\end{remark}
\subsection{Analysis of the fully-discrete scheme}
\begin{lemma}
\label{lemma:wb_pressure-source}
    The discretization of the source term to gradient pair $\pdxo (\frac{(q^\ell)^2}{\rho^{\ell}} + \frac{p^\ell}{\veps^2}) + \rho^\ell \frac{\pdxo \phi}{\veps^2}$ in Definition~\ref{def:fully_disc_AP} is well-balanced, i.e., if $(\rho^\ell_i, u^\ell_i) = (\rho_{\mathrm{eq},i}, 0)$, then the discretization $\nu_{x_1} \delta_{x_1} \mathcal{F}^{\ell}_{2,i} + \Delta t \mathcal{S}^\ell_i/\veps^2 = 0$.
\end{lemma}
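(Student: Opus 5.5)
Assume $(\rho^\ell_i,u^\ell_i)=(\rho_{\mathrm{eq},i},0)$ for all $i$, so $q^\ell_i=0$ and $p^\ell_i=\rho_{\mathrm{eq},i}^\gamma$. The plan is to compute the reconstructed interface states and the source term explicitly, and then check that the flux difference cancels the source exactly in every cell.

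\textbf{Step 1: the equilibrium variable is constant.} By \eqref{eq:rho_eq} and the definition of $\beta$, we have $\beta-\phi=\frac{\gamma}{\gamma-1}\rho_\mathrm{eq}^{\gamma-1}$. Hence
\[
e^{H}=\Big(\tfrac{\gamma-1}{\gamma}(\beta-\phi)\Big)^{\frac{\gamma}{\gamma-1}}=\rho_\mathrm{eq}^{\gamma}=p_\mathrm{eq},
\]
so $p e^{-H}\equiv 1$. The steady state therefore has constant equilibrium variable $w_1=p e^{-H}=1$ and velocity $w_2=u=0$, interpreting the cell values $H_i$ as $H(x_{1,i})$, consistent with $\rho_{\mathrm{eq},i}=\rho_\mathrm{eq}(x_{1,i})$.

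\textbf{Step 2: interface states.} Any piecewise linear reconstruction, limited or not, of a constant grid function returns that constant. So $w^\pm_{1,i+1/2}=1$ and $u^\pm_{i+1/2}=0$, which gives
\[
p^\pm_{i+1/2}=e^{H_{i+1/2}},\qquad q^\pm_{i+1/2}=0,\qquad \alpha^\ell_{i+1/2}=0.
\]
Even without $\alpha=0$, the Rusanov diffusion term vanishes because $q^+-q^-=0$. Consequently
\[
\mathcal{F}^\ell_{2,i+1/2}=\frac{e^{H_{i+1/2}}}{\veps^2},
\]
since the kinetic part $q^2/\rho$ vanishes.

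\textbf{Step 3: exact cancellation.} The flux difference is
\[
\nu_{x_1}\delta_{x_1}\mathcal{F}^\ell_{2,i}=\frac{\Delta t}{\veps^2}\,\frac{e^{H_{i+1/2}}-e^{H_{i-1/2}}}{\Delta x_1}.
\]
For the source, \eqref{eq:num_source} together with $p_i e^{-H_i}=1$ gives
\[
\frac{\Delta t}{\veps^2}\,\mathcal{S}^\ell_i=-\frac{\Delta t}{\veps^2}\,\frac{e^{H_{i+1/2}}-e^{H_{i-1/2}}}{\Delta x_1}.
\]
Adding the two expressions yields zero, which is the claim.

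\textbf{Main obstacle.} The delicate point is not the algebra but the consistency of the discrete data. We need $p^\ell_i e^{-H_i}=1$ at the cell centers, and the same constant must also hold at the interfaces after transforming back through $H_{i\pm1/2}$. This requires that $H$ and $\rho_\mathrm{eq}$ are evaluated pointwise at the same nodes, with the same constant $\beta$ used throughout. If instead $\rho_{\mathrm{eq},i}$ were taken to be a cell average, the identity $e^{H_i}=\rho_{\mathrm{eq},i}^\gamma$ would fail at $\mathcal{O}(\Delta x^2)$. To rule this out, I would state explicitly that both $H_i$ and $\rho_{\mathrm{eq},i}$ are point values at $x_{1,i}$.

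A second point to check is that the limiter in the reconstruction reproduces constants exactly. This holds for the standard minmod and MC limiters because all slopes are zero.
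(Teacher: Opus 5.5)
Your proof is correct and follows the same route as the paper. In both arguments, constancy of the equilibrium variable $p e^{-H}$ makes the piecewise linear reconstruction exact, and the Rusanov diffusion vanishes because $q^\pm_{i+1/2}=0$. The interface pressures $e^{H_{i\pm 1/2}}\,(p_i e^{-H_i})$ then give a flux difference that cancels $\Delta t\,\mathcal{S}^\ell_i/\veps^2$ exactly. Your explicit computation $e^{H}=\rho_\mathrm{eq}^{\gamma}$ (so the constant is $1$) and your sign bookkeeping are cleaner than the paper's: its displayed source identity drops the minus sign in front of $(p_{i+1/2}-p_{i-1/2})/\Delta x_1$.
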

\begin{proof}
     Consider the discretization of the momentum flux $\pdxo (\frac{q^2}{\rho} + \frac{p}{\veps^2}) $. First, note that the reconstruction preserves the equilibrium state achieved by the source gradient pair $\pdxo p^n + \rho^n \pdxo \phi$.
     Since $p_i e^{-H_i} = \text{const.}$,
     the reconstruction of the pressure term on the left can be written as
    \begin{equation}
        \left ( \omega^L_{i+1/2} \right )_1 = p_i e^{-H_i},
    \end{equation}
    as the diffusion term in the flux $\alpha_{2, i + \frac{1}{2}} (q_{i+\frac{1}{2}}^{n,+} - q_{i+\frac{1}{2}}^{n,-}) = 0$.
Note that this holds for any linear reconstruction. The pressure is therefore
    \begin{equation}
        p_{i+1/2}^- = e^{H_{i+1/2}} p_i e^{-H_i} = p_i e^{H_{i+1/2}- H_i} = p_{i+1/2}.
    \end{equation}
    The same applies for the right interface and for $i-1/2$, thus we end up with
    \begin{equation}
        \frac{1}{\Delta x_1}\left ( \frac{p^{+}_{i+1/2} + p^-_{i+1/2}}{2} - \frac{p^{+}_{i-1/2} + p^-_{i-1/2}}{2} \right ) = \frac{p_{i+1/2} - p_{i-1/2}}{\Delta x_1}.
    \end{equation}
    For the second term, i.e., the discretization of the source term, we notice that
    \begin{equation}
    -p_i e^{-H_i} \left (\frac{e^{H_{i+1/2}}-e^{H_{i-1/2}}}{\Delta x_1} \right ) = \frac{p_{i+1/2} - p_{i-1/2}}{\Delta x_1}.
    \end{equation}
    Thus, the discrete source gradient pair is well-balanced.
\end{proof}

\begin{lemma}\label{lemma:wb_density-source}
    The discretization of the source term-gradient pair $\pdxo \rho^{\ell} - \frac{\rho^{\ell}}{\rho_\mathrm{eq}}\pdxo \rho_\mathrm{eq}$ in Definition~\ref{def:fully_disc_AP} is well-balanced, i.e., if $(\rho^\ell_i, u^\ell_i) = (\rho_{\mathrm{eq},i}, 0)$ then the discretization $\nu_{x_1} \delta_{x_1} \mathcal{G}_{2,i}^\ell - \Delta t \tilde{\mathcal{S}_i^\ell} = 0$.
\end{lemma}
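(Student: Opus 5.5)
The plan is a direct substitution of the equilibrium state into the two discrete operators in Definition~\ref{def:fully_disc_AP}, followed by a cell-by-cell comparison. Unlike Lemma~\ref{lemma:wb_pressure-source}, no reconstruction is involved. The flux $\mathcal{G}_{2}$ in \eqref{eq:num_flux} is a plain central average of cell values, and the penalization source $\tilde{\mathcal{S}}$ in \eqref{eq:num_source} uses the same centered stencil. I therefore expect both expressions to reduce to the same centered difference of $\rho_{\mathrm{eq}}$.

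First, I expand the flux difference. By the definition of $\delta_{x_1}$ and $\mathcal{G}_{2,i+1/2}^\ell = \tfrac12(\rho_i^\ell+\rho_{i+1}^\ell)$,
\begin{equation*}
\delta_{x_1}\mathcal{G}_{2,i}^\ell = \tfrac12(\rho_i^\ell+\rho_{i+1}^\ell) - \tfrac12(\rho_{i-1}^\ell+\rho_i^\ell) = \tfrac12(\rho_{i+1}^\ell-\rho_{i-1}^\ell).
\end{equation*}
The cell value $\rho_i^\ell$ cancels, so this identity holds for any grid function. Inserting $\rho_j^\ell=\rho_{\mathrm{eq},j}$ and multiplying by $\nu_{x_1}=\Delta t/\Delta x_1$ gives $\nu_{x_1}\delta_{x_1}\mathcal{G}_{2,i}^\ell = \Delta t\,(\rho_{\mathrm{eq},i+1}-\rho_{\mathrm{eq},i-1})/(2\Delta x_1)$.

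Second, I evaluate the source. At equilibrium the prefactor $\rho_i^\ell/\rho_{\mathrm{eq},i}$ in $\tilde{\mathcal{S}}_i^\ell$ equals $1$; here I use $\rho_{\mathrm{eq}}>0$, as assumed in the equilibrium reformulation. Hence $\Delta t\,\tilde{\mathcal{S}}_i^\ell = \Delta t\,(\rho_{\mathrm{eq},i+1}-\rho_{\mathrm{eq},i-1})/(2\Delta x_1)$. Subtracting the two expressions yields $\nu_{x_1}\delta_{x_1}\mathcal{G}_{2,i}^\ell - \Delta t\,\tilde{\mathcal{S}}_i^\ell=0$ in every cell.

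The only point needing care is the boundary cells, where $\rho_{i\pm1}$ and $\rho_{\mathrm{eq},i\pm1}$ refer to ghost cells. I would check that the periodic or reflecting ghost-cell treatment assigns the ghost values of $\rho^\ell$ consistently with those of $\rho_{\mathrm{eq}}$. Then the equilibrium state $\rho^\ell=\rho_{\mathrm{eq}}$ is extended identically, and the computation above holds verbatim at the boundary. Finally, I would remark that the velocity condition $u^\ell_i=0$ is not needed for this pair, since neither $\mathcal{G}_2$ nor $\tilde{\mathcal{S}}$ involves $q$.
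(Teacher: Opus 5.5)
Your proposal is correct and is essentially the paper's proof. Both reduce $\nu_{x_1}\delta_{x_1}\mathcal{G}_{2,i}^\ell$ and $\Delta t\,\tilde{\mathcal{S}}_i^\ell$, at equilibrium, to the same centered difference $\Delta t(\rho_{\mathrm{eq},i+1}-\rho_{\mathrm{eq},i-1})/(2\Delta x_1)$, which then cancels. Your remarks on consistent ghost-cell treatment at the boundary, and on $u_i^\ell=0$ being unnecessary for this pair, are valid details that the paper leaves implicit.
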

\begin{proof}
From \eqref{eq:num_flux} we have the term $\pdxo \rho^\ell$ is discretized as:
\begin{equation}\label{eq:dxrhon_wb}
    \nu_{x_1} \delta_{x_1} \mathcal{G}_{2,i}^\ell = \nu_{x_1}\frac{(\rho^\ell_{i+1} + \rho^\ell_{i}) - (\rho^\ell_{i} + \rho^\ell_{i-1})}{2} = \frac{\Delta t}{\Delta x_1} \frac{(\rho_{\mathrm{eq},i+1} - \rho_{\mathrm{eq},i-1})}{2}.
\end{equation}
Now, looking at $\Delta t \tilde{\mcal{S}}^\ell_{i}$ from \eqref{eq:num_source} we get
\begin{align}\label{eq:si_wb}
    \Delta t \mcst_i^{\ell} = \Delta t \frac{\rho^{\ell}_i}{\rho_{\mathrm{eq},i}} \left(\frac{\rho_{\mathrm{eq},i+1} - \rho_{\mathrm{eq}, i-1}}{2 \Delta x_1} \right) = \Delta t \frac{\rho_{\mathrm{eq},i+1} - \rho_{\mathrm{eq}, i-1}}{2 \Delta x_1}.
\end{align}
Combining \eqref{eq:dxrhon_wb} and \eqref{eq:si_wb} we get
$\nu_{x_1} \delta_{x_1} \mathcal{G}_{2,i}^\ell - \Delta t \mcst_i^{\ell}= 0$.
\end{proof}

\begin{lemma}\label{lemma:wb_elliptic}
    The discrete elliptic problem \eqref{eq:elliptic_fd} for initial data in equilibrium, i.e., $(\rho^n_i, u^n_i) = (\rho_{\mathrm{eq},i}, 0)$, and for  Dirichlet ($\rho_{\mathrm{eq}}$), periodic, or Neumann extrapolation boundary conditions, has the unique solution $\rho_{\mathrm{eq},i}$.
\end{lemma}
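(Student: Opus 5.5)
The proof has two parts: \emph{existence}, that $\rho_{\mathrm{eq},i}$ solves \eqref{eq:elliptic_fd} at every stage, and \emph{uniqueness}, that the linear operator on the left-hand side of \eqref{eq:elliptic_fd} is injective for the stated boundary conditions.

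For existence, I would argue by induction over the stages $k=1,\ldots,s$. The induction hypothesis is $(\rho^\ell_i,q^\ell_i)=(\rho_{\mathrm{eq},i},0)$ for all $\ell<k$, which holds trivially for the data at $t^n$.
\begin{itemize}
\item By Lemma~\ref{lemma:wb_pressure-source}, every explicit contribution $\nu_{x_1}\delta_{x_1}\mathcal{F}^\ell_{2,i}+\Delta t\,\mathcal{S}^\ell_i/\veps^2$ in $\hat q^k_i$ vanishes.
\item By Lemma~\ref{lemma:wb_density-source}, both penalization pairs $\nu_{x_1}\delta_{x_1}\mathcal{G}^\ell_{2,i}-\Delta t\,\tilde{\mathcal{S}}^\ell_i$ vanish, once with coefficient $\tilde a_{k,\ell}$ and once with $a_{k,\ell}$.
\item Since $q^\ell\equiv0$, we have $\mathcal{G}^\ell_1\equiv0$.
\end{itemize}
Hence $\hat q^k_i=0$ and $\hat\rho^k_i=\rho_{\mathrm{eq},i}$. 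I interpret the implicit term $\delta_{x_1}(\delta_{x_1}\rho^k_i)-\delta_{x_1}\tilde{\mathcal{S}}^k_i$ in \eqref{eq:elliptic_fd} as the discrete divergence of the stage-$k$ penalization pair, consistently with the update formula for $q^k_i$. Lemma~\ref{lemma:wb_density-source} then shows that this pair vanishes at $\rho^k=\rho_{\mathrm{eq}}$. So $\rho^k_i=\rho_{\mathrm{eq},i}$ satisfies \eqref{eq:elliptic_fd}, and the explicit momentum update returns $q^k_i=0$, which closes the induction. I must also check that the ghost values are consistent with equilibrium under each boundary condition. For Dirichlet data $\rho_{\mathrm{eq}}$ and for periodic data this is immediate. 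For Neumann extrapolation, the boundary stencil of Lemma~\ref{lemma:wb_density-source} has to be re-examined.

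For uniqueness, take the difference $w_i$ of two solutions. It satisfies the homogeneous problem $w_i+c\,\delta_{x_1}\mathcal{P}_i(w)=0$, where $c=\nu_{x_1}^2a_{k,k}^2/\veps^2>0$ and $\mathcal{P}$ denotes the discrete penalization pair. Following the continuous identity $\partial\rho-\frac{\rho}{\rho_{\mathrm{eq}}}\partial\rho_{\mathrm{eq}}=\rho_{\mathrm{eq}}\,\partial(\rho/\rho_{\mathrm{eq}})$, I substitute $w_i=\rho_{\mathrm{eq},i}v_i$. A short computation gives
\[
(w_{i+1}-w_{i-1})-\frac{w_i}{\rho_{\mathrm{eq},i}}(\rho_{\mathrm{eq},i+1}-\rho_{\mathrm{eq},i-1})
=\rho_{\mathrm{eq},i+1}(v_{i+1}-v_i)+\rho_{\mathrm{eq},i-1}(v_i-v_{i-1}),
\]
which is a positively weighted centered difference of $v$. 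The homogeneous equation then becomes a discrete analogue of $\rho_{\mathrm{eq}}v-c\,\partial(\rho_{\mathrm{eq}}\partial v)=0$. I multiply by $v_i$, sum over $i$, and apply summation by parts, where the boundary terms drop for periodic, homogeneous Dirichlet, or homogeneous Neumann data. The aim is to reach
\[
\sum_i\rho_{\mathrm{eq},i}v_i^2+c\,Q(v)=0
\]
with $Q(v)\ge0$. Because $\rho_{\mathrm{eq}}>0$, this forces $v\equiv0$ and hence $w\equiv0$.

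The main obstacle is the sign of $Q$. The composition of two centered differences produces a wide, $2\Delta x_1$ stencil, and the weights $\rho_{\mathrm{eq},i\pm1}$ sit inside the outer difference, so $Q$ is not obviously a sum of squares. The first thing I would try is to write $Q$ as a quadratic form in the odd and even differences $v_{i+1}-v_{i-1}$ and check nonnegativity directly, using only $\rho_{\mathrm{eq}}>0$. If that fails in general, the fallback is to show that the matrix $I+cM$ is invertible. For the Dirichlet case I would use $M$-matrix or diagonal-dominance arguments, valid when $\rho_{\mathrm{eq}}$ varies slowly on the mesh. For the periodic case I would use a Fourier/perturbation argument about the constant-$\rho_{\mathrm{eq}}$ operator $I-c\,\delta^2_{2\Delta x_1}$, which is clearly invertible.
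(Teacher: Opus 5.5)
Your existence half matches the paper's argument: Lemmas~\ref{lemma:wb_pressure-source} and~\ref{lemma:wb_density-source} give $\hat q^k_i=0$ and $\hat\rho^k_i=\rho_{\mathrm{eq},i}$, the implicit part of \eqref{eq:elliptic_fd} annihilates $\rho_{\mathrm{eq}}$, and one inducts over the stages. Your Neumann caveat is legitimate: in the boundary cells the penalization pair vanishes only if the ghost values of $\rho_{\mathrm{eq}}$ in $\tilde{\mathcal S}$ are extrapolated exactly like those of $\rho$.

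\textbf{The gap is uniqueness.} The paper only asserts it (``imposing boundary conditions yields the uniqueness''), and you leave it hanging on the sign of $Q$. That sign cannot be fixed, because for the wide-stencil operator you correctly identified, uniqueness is false without further hypotheses. Put $c=\nu_{x_1}^2a_{k,k}^2/\veps^2$, $\sigma_i=(\rho_{\mathrm{eq},i+1}-\rho_{\mathrm{eq},i-1})/\rho_{\mathrm{eq},i}$, $(B\rho)_i=\rho_{i+1}-\rho_{i-1}-\sigma_i\rho_i$ and $(D_c g)_i=g_{i+1}-g_{i-1}$. The left-hand side of \eqref{eq:elliptic_fd} is then $\rho\mapsto\rho-\frac{c}{4}D_c(B\rho)$. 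Take six periodic cells with $\rho_{\mathrm{eq}}=(1,1,5,1,1,5)$, so $\sigma=(-4,4,0,-4,4,0)$, and let $w=(2,-2,-1,1,-1,1)$. A direct computation gives $Bw=(5,5,3,4,4,3)$ and $D_c(Bw)=w$. Hence the operator maps $w$ to $(1-c/4)\,w$, and for $c=4$ (i.e.\ $\veps=\nu_{x_1}a_{k,k}/2$) every $\rho_{\mathrm{eq}}+t\,w$ solves \eqref{eq:elliptic_fd}. In particular your $Q$ is indefinite: your own identity gives $cQ(w/\rho_{\mathrm{eq}})=-\sum_i w_i^2/\rho_{\mathrm{eq},i}<0$. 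The mechanism is the odd--even decoupling of the $2\Delta x_1$ stencil: $Q$ vanishes on the checkerboard $v_i=(-1)^i$, but for generic nonconstant $\rho_{\mathrm{eq}}$ it has a nonzero first variation there.

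\textbf{Your fallbacks only give $\veps$-dependent statements.} In row $i$ the entries in columns $i\pm1$ are $\pm\frac{c}{4}\sigma_{i\pm1}$, with no fixed sign, so there is no $M$-matrix structure. Strict diagonal dominance, which would give uniqueness, holds only if $\frac{c}{4}(|\sigma_{i-1}|+|\sigma_{i+1}|)<1$. For smooth $\rho_{\mathrm{eq}}$ one has $\sigma_i=\mathcal{O}(\Delta x_1)$, so this is a condition of the form $\Delta t^2\le C\veps^2\Delta x_1$ — exactly the restriction the AP design is meant to avoid. The Fourier route fails for the same reason. The correction $\frac{c}{4}D_c(\sigma\,\cdot)$ to $I-c\,\delta^2_{2\Delta x_1}$ grows linearly in $c$, and it is not small on the constant and checkerboard modes, where the unperturbed inverse is the identity.

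What can actually be proved is existence (with consistent ghost values) plus uniqueness under an explicit hypothesis such as the diagonal-dominance bound. A result uniform in $\veps$ would require one of two things:
\begin{itemize}
\item a resolution assumption on $\rho_{\mathrm{eq}}$ together with a genuine spectral argument that $\rho\mapsto-\frac14D_c(B\rho)$ has no negative real eigenvalue; or
\item a different (e.g.\ compact, interface-weighted) discretization of the penalization pair, for which your energy identity does close.
\end{itemize}
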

\begin{proof}
    We prove the result for the first stage, i.e., $k = 1$. For the other stages, the result follows by induction and well-balancedness of the flux-source discretizations.

    Since $u^n_i = 0$ and $\rho^n_i = \rho_{\mathrm{eq},i}$, Lemmas~\ref{lemma:wb_pressure-source} and \ref{lemma:wb_density-source} yield $\hat{q}^{1}_i = 0$. Hence, the elliptic problem \eqref{eq:elliptic_fd} becomes
    \begin{equation*}
        \begin{aligned}
           & -\frac{\nu_{x_1}^2 a_{1,1}^2}{\veps^2} \delta_{x_1} \left(\delta_{x_1} \rho_i^{1}\right) +\frac{\nu_{x_1} \Dlt a_{1,1}^2}{\veps^2} \delta_{x_1} \tilde{\mcal{S}}^{1}_i + \rho^{1}_i = \rho_{\mathrm{eq},i} \\
            \implies & -\frac{\nu_{x_1}^2 a_{1,1}^2}{\veps^2} \delta_{x_1} \left(\delta_{x_1} \rho_i^{1}\right) +\frac{\nu_{x_1}^2 a_{1,1}^2}{\veps^2} \delta_{x_1} \left( \frac{\rho_i^{1}}{\rho_{\mathrm{eq},i}} \delta_{x_1} \rho_{\mathrm{eq},i} \right) + \rho^{1}_i = \rho_{\mathrm{eq},i}.
        \end{aligned}
    \end{equation*}
If $\rho^{1}_i = \rho_{\mathrm{eq},i}$, then
\[ -\frac{\nu_{x_1}^2 a_{1,1}^2}{\veps^2} \delta_{x_1} \left(\delta_{x_1} \rho_i^{1}\right) +\frac{\nu_{x_1}^2 a_{1,1}^2}{\veps^2} \delta_{x_1} \left( \frac{\rho_i^{1}}{\rho_{\mathrm{eq},i}} \delta_{x_1} \rho_{\mathrm{eq},i} \right)  = 0. \]
Thus, $\rho^{1}_i = \rho_{\mathrm{eq},i}$ solves the linear system above.
Imposing boundary conditions yields the uniqueness of the solution.
\end{proof}
\begin{proposition}
    The fully-discrete numerical scheme in Definition~\ref{def:fully_disc_AP} is well-balanced, i.e., if $(\rho^n_i, u^n_i) = (\rho_{\mathrm{eq},i}, 0) \implies (\rho^{n+1}_i, u^{n+1}_i) = (\rho_{\mathrm{eq},i}, 0)$.
\end{proposition}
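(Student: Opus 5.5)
We argue by induction over the stages $k=1,\ldots,s$ of Definition~\ref{def:fully_disc_AP}. The claim is that every stage value satisfies $(\rho^k_i,q^k_i)=(\rho_{\mathrm{eq},i},0)$. Since the tableaux are GSA, $(\rho^{n+1}_i,q^{n+1}_i)=(\rho^s_i,q^s_i)$, and then $u^{n+1}_i=q^{n+1}_i/\rho^{n+1}_i=0$ because $\rho_{\mathrm{eq},i}>0$. The base data $(\rho^n_i,q^n_i)=(\rho_{\mathrm{eq},i},0)$ plays the role of the ``stage $0$'' in the sums defining $\hat\rho^k_i,\hat q^k_i$.

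For the inductive step, assume $(\rho^\ell_i,q^\ell_i)=(\rho_{\mathrm{eq},i},0)$ for all $\ell<k$.

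\emph{Explicit mass predictor.} Since $q^\ell\equiv 0$, we have $\mathcal{G}^\ell_{1,i+1/2}=0$, so $\hat\rho^k_i=\rho^n_i=\rho_{\mathrm{eq},i}$.

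\emph{Explicit momentum predictor.} In each $\tilde a_{k,\ell}$ term we split the bracket into $\nu_{x_1}\delta_{x_1}\mathcal{F}^\ell_{2,i}+\Delta t\,\mathcal{S}^\ell_i/\veps^2$ and $-\bigl(\nu_{x_1}\delta_{x_1}\mathcal{G}^\ell_{2,i}-\Delta t\,\mcst^\ell_i\bigr)/\veps^2$. The first part vanishes by Lemma~\ref{lemma:wb_pressure-source}; its use of the reconstruction of $pe^{-H}$ is valid because $p^\ell_ie^{-H_i}$ is constant at equilibrium. The second part vanishes by Lemma~\ref{lemma:wb_density-source}. Each $a_{k,\ell}$ term equals $\bigl(\nu_{x_1}\delta_{x_1}\mathcal{G}^\ell_{2,i}-\Delta t\,\mcst^\ell_i\bigr)/\veps^2$, which is again zero by Lemma~\ref{lemma:wb_density-source}. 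Hence $\hat q^k_i=0$.

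\emph{Implicit density solve.} The right-hand side of \eqref{eq:elliptic_fd} reduces to $\rho_{\mathrm{eq},i}$. Exactly as in Lemma~\ref{lemma:wb_elliptic}, $\rho^k_i=\rho_{\mathrm{eq},i}$ is a solution, because the operator part equals $\nu_{x_1}\delta_{x_1}$ applied to the balanced combination of Lemma~\ref{lemma:wb_density-source} and therefore vanishes. Uniqueness under the stated boundary conditions then gives $\rho^k_i=\rho_{\mathrm{eq},i}$.

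\emph{Momentum corrector.} We have $q^k_i=\hat q^k_i-\tfrac{a_{k,k}}{\veps^2}\bigl(\nu_{x_1}\delta_{x_1}\mathcal{G}^k_{2,i}-\Delta t\,\mcst^k_i\bigr)$. Since $\hat q^k_i=0$ and $\rho^k=\rho_{\mathrm{eq}}$, Lemma~\ref{lemma:wb_density-source} makes the bracket vanish, so $q^k_i=0$. This closes the induction.

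The main obstacle is the implicit step, specifically uniqueness for \eqref{eq:elliptic_fd}, since the argument relies on the discrete operator $\rho\mapsto\rho-\tfrac{\nu^2a_{k,k}^2}{\veps^2}\bigl(\delta\delta\rho-\delta\mcst(\rho)\bigr)$ being invertible. I will take this from Lemma~\ref{lemma:wb_elliptic}, whose uniqueness claim for the given boundary conditions is stated there for $k=1$; the same matrix appears at every stage with $a_{1,1}$ replaced by $a_{k,k}$. If $a_{k,k}=0$, as in type-CK first stages, the operator is the identity and uniqueness is trivial. A second, minor point is consistency between the two discretizations of $\pdxo\rho$: Lemma~\ref{lemma:wb_elliptic} writes $\delta_{x_1}\rho_{\mathrm{eq}}$ inside $\mcst$. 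I will work directly with the definitions in \eqref{eq:num_flux}--\eqref{eq:num_source}, where both use the same centered difference $(\rho_{i+1}-\rho_{i-1})/2$, so the cancellation is exact.
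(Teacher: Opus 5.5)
Your proof is correct and follows essentially the same route as the paper: Lemmas~\ref{lemma:wb_pressure-source} and~\ref{lemma:wb_density-source} make every explicit momentum contribution vanish, the uniqueness asserted in Lemma~\ref{lemma:wb_elliptic} forces $\rho^k_i=\rho_{\mathrm{eq},i}$, and the GSA property carries the last stage over to $t^{n+1}$. Your version makes the stage induction explicit and orders the steps more carefully. You establish $\hat q^k_i=0$, then $\rho^k_i=\rho_{\mathrm{eq},i}$, then $q^k_i=0$, whereas the paper asserts $u^k_i=0$ before determining $\rho^k_i$. Like the paper, you rely on the invertibility of the discrete elliptic operator, which is asserted there rather than proved.
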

\begin{proof}
Lemmas~\ref{lemma:wb_pressure-source} and \ref{lemma:wb_density-source} ensure that the flux and source term discretizations maintain the steady state at every intermediate stage, i.e.,
$u^k_i = 0$ for $k = 1, \ldots s$.
Moreover, the uniqueness of $\rho_{\mathrm{eq},i}$ as a solution to the linear elliptic problem \eqref{eq:elliptic_fd} ensures that
$\rho^k_i = \rho_{\mathrm{eq},i}$, for $k = 1, \ldots, s$.
Since we consider GSA IMEX-RK schemes we have
$\rho^{n+1}_i = \rho^s_i = \rho_{\mathrm{eq},i}$ and $u^{n+1}_i = 0$.
\end{proof}

\section{Numerical case studies}
\label{sec:numerical_results}
In this section we present the results of numerical case studies. The tests are
carried out for the penalized IMEX-WB-AP scheme in Definition~\ref{def:fully_disc_AP}, to be referred to as WB-AP scheme. The additive IMEX scheme of choice is the DP2-A(2,4,2) from \cite{DP13} given in \eqref{eq:dp2a2}.
For all the test cases, $\beta$ in \eqref{eq:dp2a2} is fixed at 0.7, until stated otherwise. The goals of the numerical case studies carried out are to verify the following properties of the penalized IMEX-WB-AP scheme:
\begin{itemize}
    \item well-balancedness;
    \item uniform second-order convergence in the asymptotic regime;
    \item asymptotic stability, i.e., time steps independent of $\veps$;
    \item employability for a wide range of $\veps$.
\end{itemize}
These properties assert the robustness of the newly developed scheme. For the NWB scheme, to be referred to as NWB-AP scheme, the same IMEX-AP scheme as Definition~\ref{def:fully_disc_AP} is used barring the hydrostatic reconstruction \eqref{eq:steady_state_var} and a simple second-order central difference for the sources instead of the WB source discretization in \eqref{eq:grav_sour_disc}.
All source code required to reproduce the numerical experiments is available online~\cite{artiano2026asymptoticRepo}
\begin{equation}
\label{eq:dp2a2}
\begin{aligned}
   \begin{array}{c|c c c c}
    0	  & 0		& 0	    & 0	     & 0\\
    0     & 0		& 0	    & 0	     & 0\\
    1     & 0		& 1     & 0	     & 0\\
    1     & 0       & 1/2 & 1/2 & 0\\
    \hline
    0     & 0       & 1/2 & 1/2  & 0\\
  \end{array}
  \qquad &
  \begin{array}{c|c c c c}
    \beta & \beta  & 0	         & 0	          & 0  \\
    0	     & -\beta & \beta      & 0	          & 0  \\
    1	     & 0         & 1 - \beta  & \beta       & 0  \\
    1	     & 0         & 1/2         & 1/2 - \beta & \beta  \\
    \hline
             & 0         & 1/2         & 1/2 - \beta & \beta
  \end{array} \\
  \mbox{DP2-A(2,4,2)}, & \mbox{\ type-A,  GSA \cite{DP13}.}
  \end{aligned}
\end{equation}
\subsection{Hydrostatic steady state and its perturbation}
\label{ssec:hydro_stat}
We consider the hydrostatic steady state \eqref{eq:rho_eq} with $C_0 = 1$ and add a perturbation of magnitude $\zeta$ to it \cite{Chandrashekar2015}. The initial data reads:
\begin{equation}\label{eq:wb_1D_mD}
\begin{aligned}
    \rho(0,\vx) &= \rho_{(0)}(\vx)+ \zeta  \psi(\vx), \qquad \vu(0, \vx) = 0, \\
    \rho_{(0)}(\vx)&:=\left(1 - \frac{\gamma - 1}{\gamma} \phi(\vx) \right)^{\frac{1}{\gamma - 1}}.
\end{aligned}
\end{equation}
In what follows, different tests are carried out with the above initial conditions, in 1D and 2D, with different choices of $\phi$, $\psi$, and $\zeta$.
For all the test cases here we apply no-flux boundary conditions.
As the initial velocity is zero, a time step $\Delta t = 0.5  \Delta x $ is chosen for all values of $\veps$, if not stated otherwise.

\subsubsection*{Case 1: Well-balancing test}
In this 1D test case the spatial domain is $[0,1]$. We consider three different choices for gravitational potentials: $\phi(x_1) = x_1, x_1^2$, and $\sin(2 \pi x_1)$.
The parameters for this test are $\zeta = 0.0$ (the steady state) and $\gamma = 1.4$.
The spatial domain is discretized by 100 mesh points and as the initial velocity is zero, a time step $\Delta t = 0.5  \Delta x = \mathcal{O}(10^{-2})$ is chosen for all values of $\veps$.

Table~\ref{table:1d_wb} shows the $L^2$-errors for $\veps$ ranging from $1$ (compressible regime) to $10^{-4}$ (anelastic regime) at a very long time $T=10.0$.
The newly developed penalized IMEX-WB-AP scheme maintains the steady state with very high precision, for all the different choices of potential, irrespective of the flow regime determined by $\veps$.
\begin{table}[htb]
\begin{center}
  \caption{Well-balancing test (Case 1): $L^2$ errors in density and velocity for hydrostatic state at T = 10.0.}
   \vspace{-2mm}
  \label{table:1d_wb}
  {\setlength{\extrarowheight}{1.0mm}
  \begin{tabular}[10pt] {
|p{0.8cm}||p{1.4cm}|p{1.4cm}||p{1.4cm}|p{1.4cm}||p{1.4cm}|p{1.4cm}|}
    \hline
     & \multicolumn{2}{c||}{$\phi(x_1)=x_1$}
& \multicolumn{2}{c||}{$\phi(x_1)=x_1^2$}
& \multicolumn{2}{c|}{$\phi(x_1)=\sin(2 \pi x_1)$} \\
    \hline
    $\veps$& $L^2$ errors $\rho$ & $L^2$ errors $u_1$ & $L^2$ errors $\rho$ & $L^2$ errors $u_1$
    & $L^2$ errors $\rho$ & $L^2$ errors $u_1$ \\
    \hline
    $1.0$	& 4.56E-30	& 3.79E-30      & 8.42E-30 & 1.07E-30      &  9.08E-30 & 3.03E-30\\
    $10^{-1}$&1.02E-28	& 3.24E-28  & 1.20E-28 & 9.44E-29 & 1.03E-28 & 3.06E-28  \\
    $10^{-2}$	& 1.83E-27	& 3.28E-26  & 5.84E-27 & 6.32E-27 & 6.19E-27 & 4.65E-26 \\
    $10^{-3}$	& 3.00E-25	& 3.83E-24  & 3.32E-26 & 1.98E-24 & 4.78E-25 & 1.04E-23 \\
    $10^{-4}$	& 4.64E-23	& 3.80E-22  &  5.60E-23 & 1.10E-22 & 1.21E-22 & 5.14E-22 \\
    \hline
 \end{tabular}}
  \end{center}
  \vspace{-4mm}
\end{table}

\subsubsection*{Case 2: Perturbation of steady state in 1D}
Here we perturb the hydrostatic steady state $\rho_{\mathrm{eq}}$ \eqref{eq:rho_eq} in 1D, by choosing non-zero values of $\zeta$. The gravitational potential $\phi(x_1) = x_1$, $\psi(x_1) = e^{-100(x_1 - 0.5)^2}$, and $\gamma = 1.4$ in \eqref{eq:wb_1D_mD}. The spatial domain is $[0,1]$, which is discretized by 100 mesh points.

Figure~\ref{fig:eps1_den_per} shows the evolution of the density perturbation scaled by the perturbation parameter $\zeta = 10^{-3}$ for $\veps = 1.0$.
Figure~\ref{fig:eps1_den_per}(Left) shows results for the WB-AP scheme and NWB-AP schemes for $\zeta = 10^{-3}$.
Figure~\ref{fig:eps1_den_per}(Middle) shows the same for $\zeta = 10^{-5}$.
Both plots convey that the WB-AP scheme is able to reduce the initial perturbation, whereas the NWB-AP scheme starts developing oscillations and/or increases the perturbation magnitude.
Figure~\ref{fig:eps1_den_per}(Right) shows a zoom of the NWB-AP results from Figure~\ref{fig:eps1_den_per}(Middle), elucidating the well-balanced and AP damping effects of the WB-AP penalized-IMEX scheme in Definition~\ref{def:fully_disc_AP}.

\begin{figure}[htb]
  \centering
  \includegraphics[width = \linewidth]{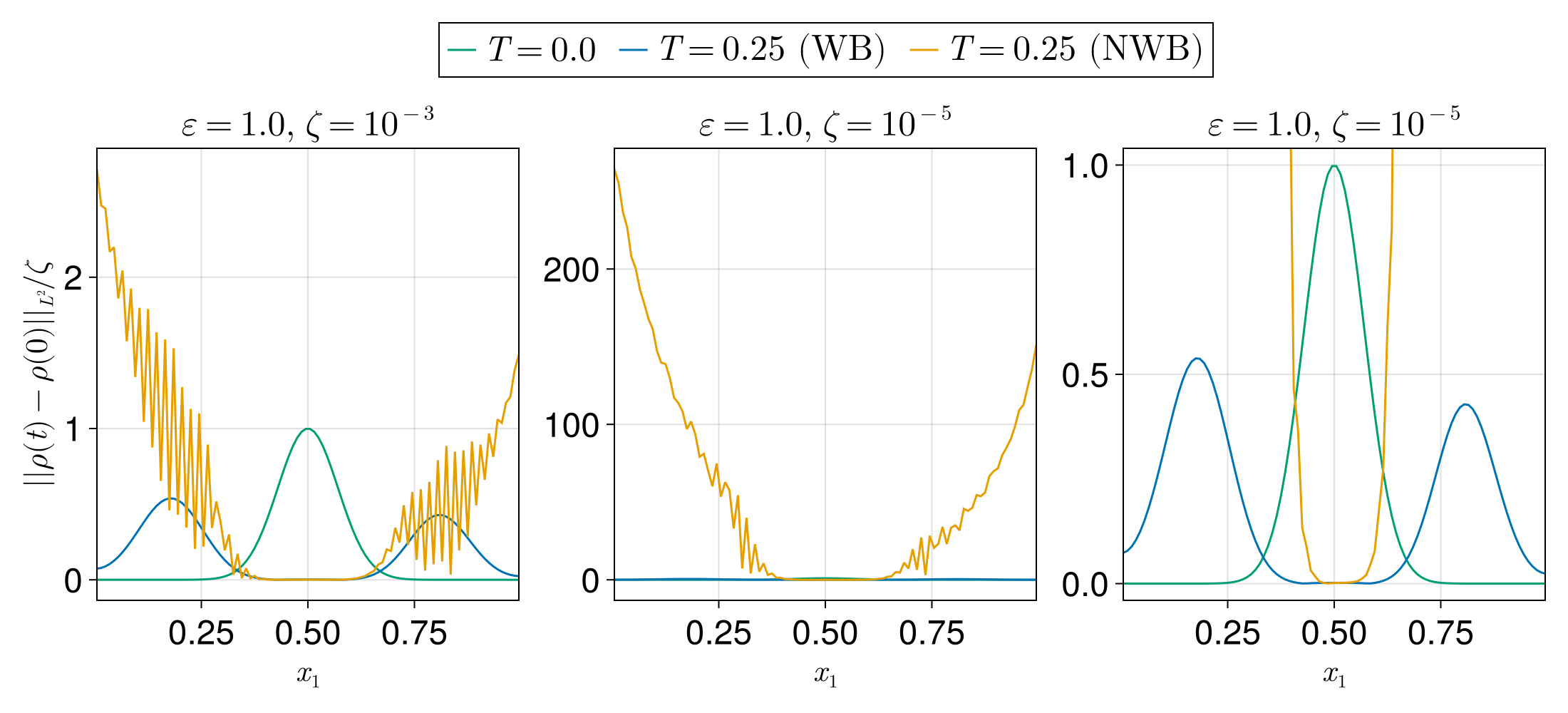}
  \vspace{-8mm}
  \caption{Perturbation of steady state in 1D: Plots of density perturbations for WB-AP and NWB-AP schemes for $\veps=1.0$. Left: $\zeta = 10^{-3}$; Middle: $\zeta = 10^{-5}$; Right: zoom of the middle panel.}
  \label{fig:eps1_den_per}
\end{figure}
%\begin{figure}[htb]
%  \centering
%  \includegraphics[height=0.144\textheight]{1D_pert_hydro/pert_wb_eps1m3.png}
%  \includegraphics[height=0.144\textheight]{1D_pert_hydro/pert_wb_eps1m5.png}
%  \includegraphics[height=0.144\textheight]{1D_pert_hydro/pert_wb_eps1m5_.png}
%  \vspace{-6mm}
%  \caption{Perturbation of steady state in 1D: Plots of density perturbations for WB-AP and NWB-AP schemes for $\veps=1.0$. Left: $\zeta = 10^{-3}$; Middle: $\zeta = 10^{-5}$; Right: zoom of the middle panel.}
%  \label{fig:eps1_den_per}
%\end{figure}

Figure~\ref{fig:epsm1-m3_den_per}(Left) plots the same profiles as Figure~\ref{fig:eps1_den_per}, but for $\veps = 10^{-1}$. Similarly, Figure~\ref{fig:epsm1-m3_den_per}(Middle) shows the same for $\veps = 10^{-2}$ and Figure~\ref{fig:epsm1-m3_den_per}(Right) for $\veps = 10^{-3}$.
For all values of $\veps$ the perturbation parameter $\zeta$ is set to be $\veps^2$. In Figure~\ref{fig:epsm1-m3_den_per}(Left) even though the NWB scheme has a similar magnitude of perturbation after T = 0.25, it is oscillatory in nature. Whereas for $\veps = 10^{-2}$ in Figure~\ref{fig:epsm1-m3_den_per}(Middle) the perturbation for the NWB scheme inflates, but the WB scheme is able to damp the initial perturbation. Figure~\ref{fig:epsm1-m3_den_per}(Right) shows that the WB scheme damps the perturbation for $\veps = 10^{-3}$, in contrast to the NWB scheme for which the solution blows up.
\begin{figure}[htbp]
 \vspace{-3mm}
  \centering
  \includegraphics[width = \linewidth]{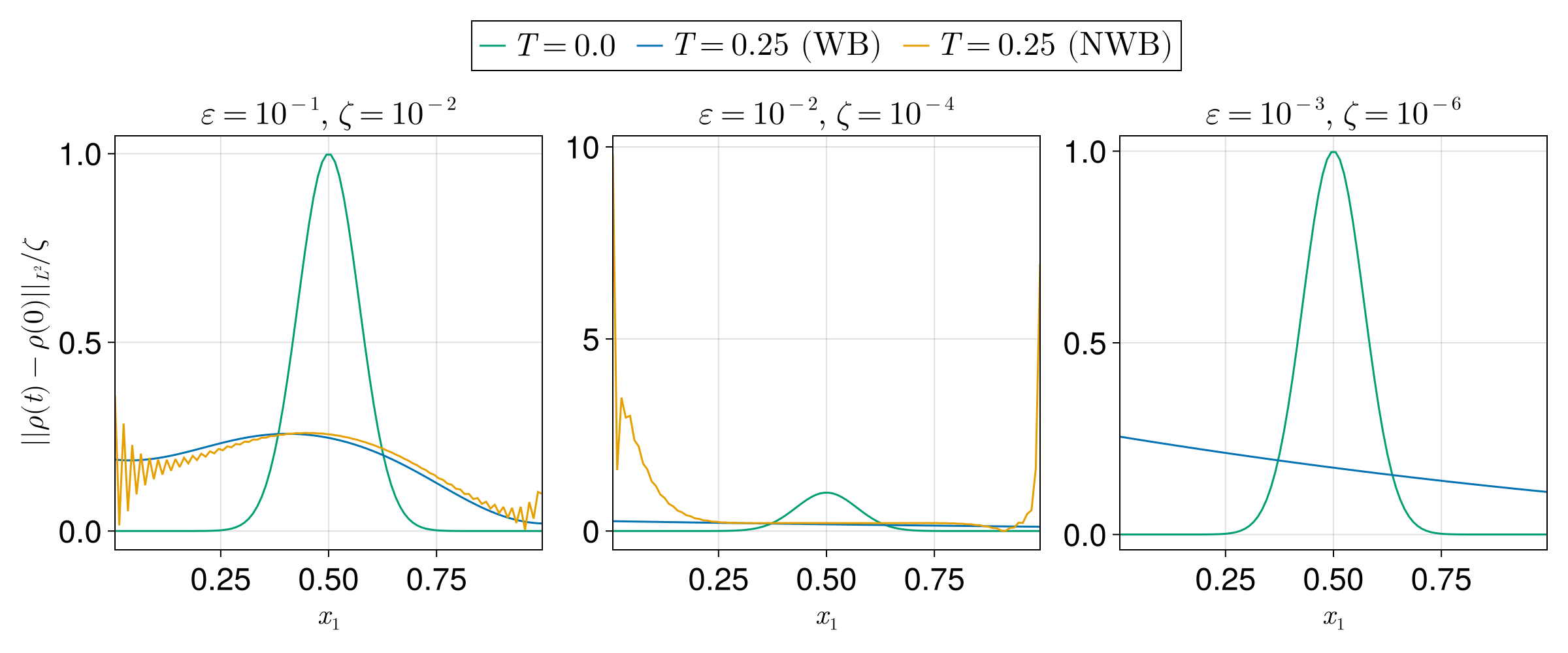}
  \vspace{-8mm}
  \caption{Perturbation of steady state in 1D: Plots of density perturbations for WB-AP and NWB-AP, with $\zeta = \veps^2$, $\veps = 10^{-1}, 10^{-2}, 10^{-3}$.}
  \label{fig:epsm1-m3_den_per}
  \end{figure}
%\begin{figure}[htbp]
% \vspace{-3mm}
%  \centering
%  \includegraphics[height=0.144\textheight]{1D_pert_hydro/pert_wb_epsm1.png}
%  \includegraphics[height=0.144\textheight]{1D_pert_hydro/pert_wb_epsm2.png}
%  \includegraphics[height=0.144\textheight]{1D_pert_hydro/pert_wb_epsm3.png}
%  \vspace{-6mm}
%  \caption{Perturbation of steady state in 1D: Plots of density perturbations for WB-AP and NWB-AP, with $\zeta = \veps^2$, $\veps = 10^{-1}, 10^{-2}, 10^{-3}$.}
 % \label{fig:epsm1-m3_den_per}
 % \end{figure}

\subsubsection*{Case 3: Asymptotic Order of Convergence (AOC)}
We compute the convergence order by considering the steady solution as the exact solution to obtain the asymptotic order of convergence \cite{SBG+99, AS20}.
Let $U_{\veps}(t^n)$ be the solution of Euler equations \eqref{eq:iseuler_mass_ane}--\eqref{eq:iseuler_mom_ane}, and $U_{(0)}(t^n)$ be the solution of the asymptotic limit system \eqref{eq:ane_lim_div}--\eqref{eq:ane_lim_mom}, at time $t^n$.
Let $U^n_{\veps}$ be the numerical solution obtained using the second-order numerical scheme developed here at the same time $t^n$.
Then, we expect the following estimates:
\begin{align*}
    U_{\veps}^n - U_{(0)}^n &= U_{\veps}^n - U_{\veps}(t^n) + U_{\veps}(t^n) - U_{(0)}(t^n) + U_{(0)}(t^n) - U_{(0)}^n \\
    &= \mco(\Dlt^2) + \mco(\veps) + \mco(\Dlt^2).
\end{align*}
Therefore, as $\veps \to 0$ we get from the above:
\[U_{\veps}^n - U_{(0)}^n = \mco(\Dlt^2).\]
The initial data is as in \eqref{eq:wb_1D_mD} in 1D with $x_1 \in [0,1]$, $\zeta = \veps$, $\phi(x_1) = x_1$, $\gamma = 1.4$.
The computations are carried out for successively refined meshes of $20, 40, 80, 160$ and $320$ mesh points with $\Delta t = 0.1 \times \Delta x_1 $, for a fixed final time T = 3.0 and for $\veps \in \{10^{-4},10^{-5}, 10^{-6}\}$.
Table~\ref{table:AOC} shows that the designed scheme achieves optimal second-order convergence rates while $\Dlt$ is kept independent of the choice of $\veps$, i.e., the scheme is uniformly asymptotically second-order accurate with respect to $\veps$.
\begin{table}\begin{center}
\caption{AOC for the penalized IMEX-WB-AP scheme for $\veps = 10^{-4}, 10^{-5}, 10^{-6}$.}
 \vspace{-4mm}
\begin{tabular}[t] {
|p{0.7cm}||p{1.8cm}|p{0.8cm}||p{1.8cm}|p{0.8cm}||p{1.8cm}|p{0.8cm}|}
    \hline
    & \multicolumn{2}{c||}{$\veps = 10^{-4}$}   & \multicolumn{2}{c||}{$\veps = 10^{-5}$}
    & \multicolumn{2}{c|}{$\veps = 10^{-6}$}\\
    \hline
    $N$& $L^2$ error $u_{1}$ & AOC & $L^2$ error $u_{1}$ &AOC
    & $L^2$ error $u_{1}$ & AOC \\
    \hline
    20	& 4.7880E-08 &      & 4.6743E-09 & &2.3970E-09 & \\

    40	& 1.2031E-08 & 1.99 & 1.2250E-09 & 1.93 & 6.5109E-10	& 1.88 \\

    80	& 3.1637E-09 & 1.92 & 3.1538E-10 & 1.95 & 1.6945E-10	& 1.94\\

    160	& 7.9766E-10 & 1.98 & 8.0725E-11 & 1.96 & 4.3404E-11	& 1.96\\

    320	& 2.0709E-10 & 1.94 & 2.1568E-11 & 1.90 & 1.1526E-11	& 1.91\\
    \hline
 \end{tabular}
   \label{table:AOC}
  \end{center}
\end{table}

\subsubsection*{Case 4: Perturbation of 2D steady state}
Next, we perturb the hydrostatic steady state $\rho_{\mathrm{eq}}$ \eqref{eq:rho_eq} in 2D by choosing $\zeta \ne 0$ in \eqref{eq:wb_1D_mD}.
The gravitational potential $\phi(x_1, x_2) = x_1 + x_2$, $\psi(x_1, x_2) = e^{-100[(x_1 - 0.3)^2 + (x_2 - 0.3)^2 ]}$, $\gamma = 1.4$, and the spatial domain $[0,1] \times [0,1]$ is divided into $50 \times 50$ mesh points.
For $\veps=1.0$, $\Dlt = 0.1 \times \Delta x$.
For $\veps=10^{-2}$, $\Dlt = 0.01 \times \Delta x$; this is to take into account the small final time.

Figure~\ref{fig:eps1-zm1_den_per} shows the density perturbation $|\rho(T) - \rho_{(0)}|$ for $\veps=1$ and different values of $\zeta$ at time T = 0.05.
Figure~\ref{fig:eps1-zm1_den_per} (Top-Left) and (Top-Right), respectively, show the plots obtained using the WB-AP scheme and the NWB-AP scheme for $\zeta = 10^{-1}$.
The performance of both schemes is similar for a large perturbation ($\zeta = 10^{-1}$).
However, Figure~\ref{fig:eps1-zm1_den_per} (Bottom-Left) and (Bottom-Right) show that the NWB-AP scheme is not able to resolve a smaller perturbation ($\zeta = 10^{-3}$), contrary to the WB-AP scheme.
\begin{figure}[htbp]
    \centering
    \includegraphics[width=\linewidth]{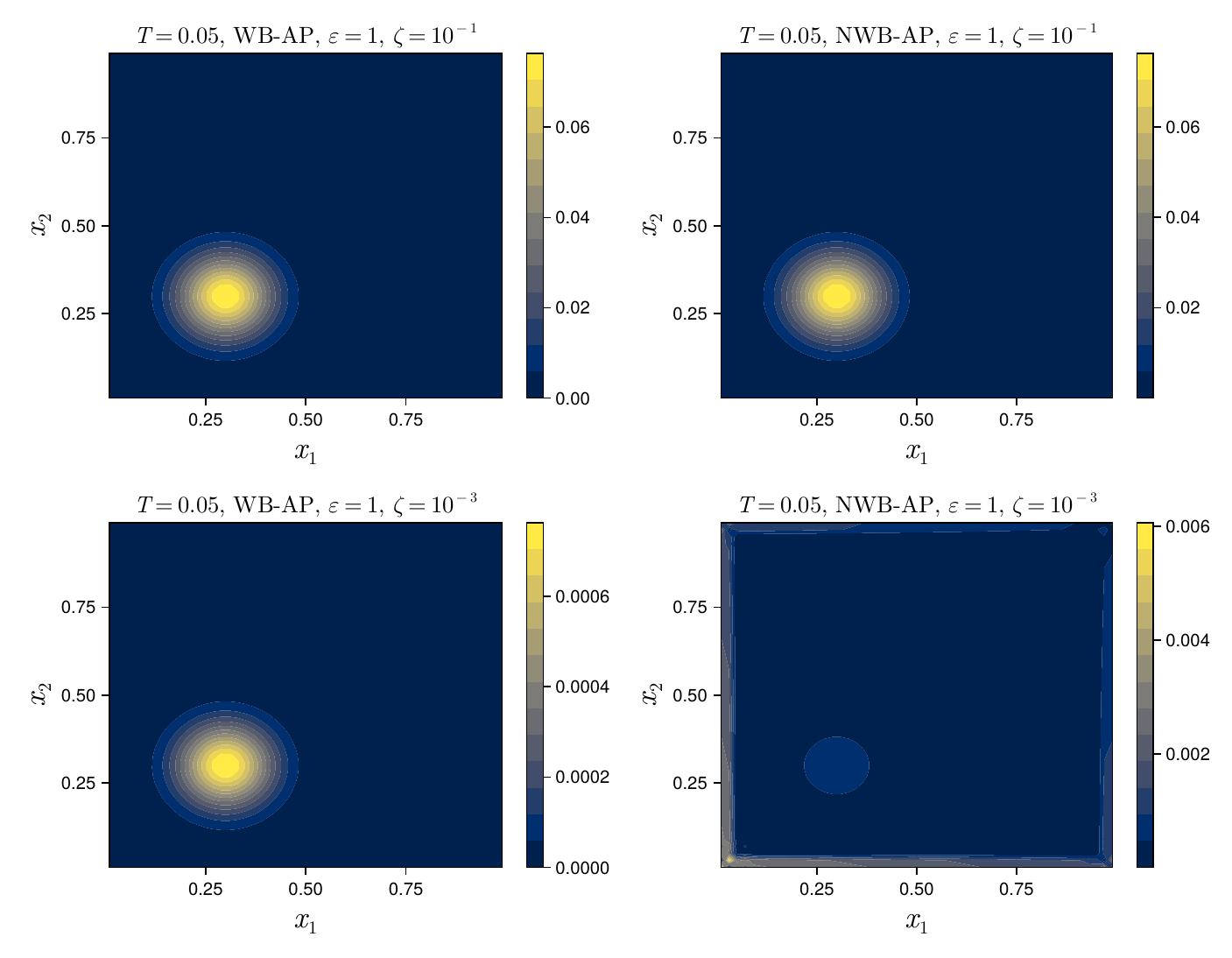}
    \vspace{-7mm}
  \caption{Perturbation of 2D steady state: Density perturbation profiles for $\veps=1.0$ and initial perturbation of the order $\mco({10^{-1}})$ and $\mco({10^{-3}})$ for the NWB-AP and WB-AP schemes.}
  \label{fig:eps1-zm1_den_per}
\end{figure}
%\begin{figure}[htbp]
% \vspace{-3mm}
%  \centering
%  \includegraphics[height=0.23\textheight]{2D_pert_hydro/eps1_zeta0p1_nwb.png}
%  \includegraphics[height=0.23\textheight]{2D_pert_hydro/eps1_zeta0p1.png}
%  \includegraphics[height=0.23\textheight]{2D_pert_hydro/eps1_zeta0p3_nwb.png}
%  \includegraphics[height=0.23\textheight]{2D_pert_hydro/eps1_zeta0p3.png}
%  \vspace{-5mm}
%  \caption{Perturbation of 2D steady state: Density perturbation profiles for $\veps=1.0$ and initial perturbation of the order $\mco({10^{-1}})$ and $\mco({10^{-3}})$ for the NWB-AP and WB-AP schemes.}
%  \label{fig:eps1-zm1_den_per}
%\end{figure}
Figure~\ref{fig:epsm1-m2_den_per} (Left) and (Right), respectively, show the plots obtained using the WB-AP scheme for $\veps = 10^{-1}$ (T = 0.005) and $10^{-2}$ (T = 0.001), respectively, and $\zeta = \veps^2$.
The WB-AP scheme is able to resolve even very small perturbations for small times.
\begin{figure}
    \centering
    \includegraphics[width=\linewidth]{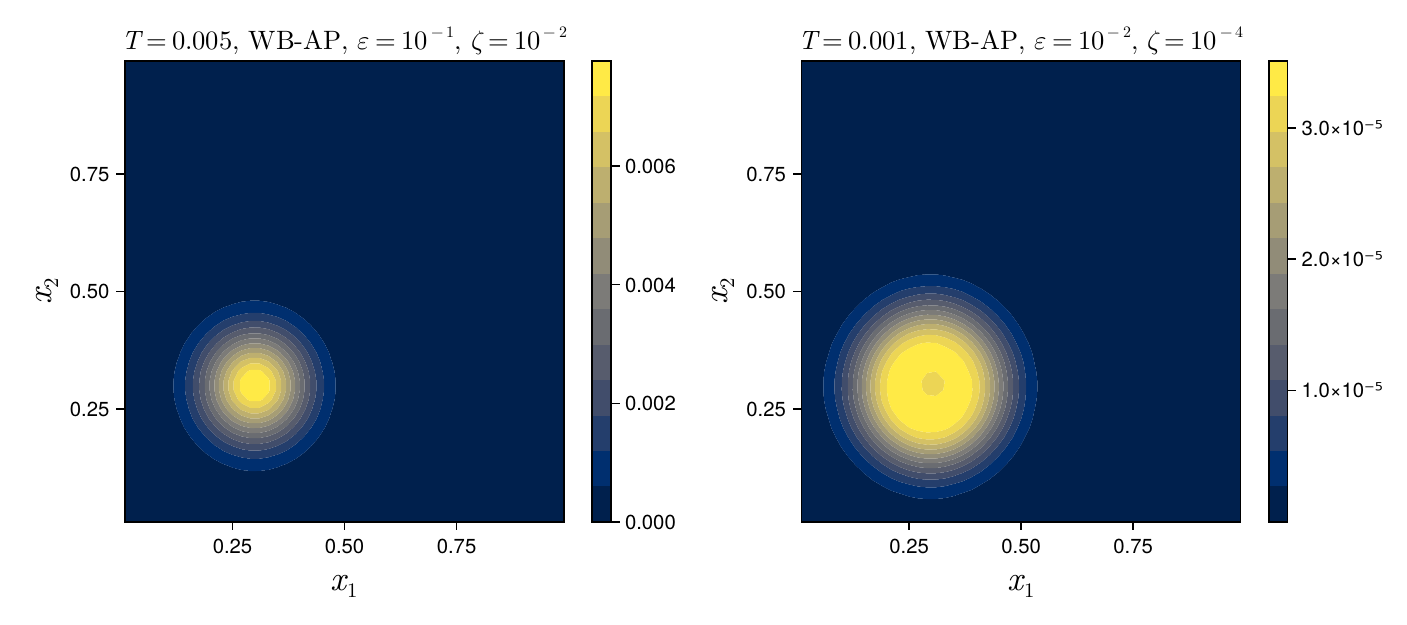}
    \vspace{-7mm}
  \caption{Perturbation of 2D steady state: Density perturbation profiles for $\veps=10^{-1}$ (Left) and $10^{-2}$ (Right) with an initial perturbation of order $\mco(\veps^2)$ for the WB-AP scheme.}
  \label{fig:epsm1-m2_den_per}
\end{figure}
%\begin{figure}[htbp]
% \vspace{-3mm}
%  \centering
%  \includegraphics[height=0.23\textheight]{2D_pert_hydro/epsm1_zeta.png}
%  \includegraphics[height=0.23\textheight]{2D_pert_hydro/epsm2_zeta.png}
%  \vspace{-6mm}
%\end{figure}
\subsection{Riemann problem}
We consider a Riemann problem with the initial condition considered in \cite{CDS26} and adapted from \cite{DT11}, given as follows:
\begin{equation}
\begin{aligned}
\rho (0, x_1) &=  1, \qquad &q(0, x_1) &= 1, \quad &x& \in [0, 0.25] \cup [0.75, 1], \\
\rho (0, x_1) &=  1 + \veps^2, &\qquad q(0, x_1) &= 1,  \quad &x& \in (0.25, 0.75].
\end{aligned}
\end{equation}
The isentropic constant is $\gamma = 2$ and the scaled Mach number is $\veps = 0.3$.
Extrapolation boundary conditions are imposed and the final time $T$ is $0.01$.
The reference solution is obtained by a standard first-order explicit scheme (as described in \cite{DT11}) on a fine mesh with $N = 10^4$ points and a CFL number $0.1$.
The penalized IMEX-WB-AP scheme in Definition~\ref{def:fully_disc_AP} is subjected to a coarser mesh of  $N = 10^3$ mesh points with CFL number $0.9$.

Figure~\ref{fig:riemann} shows the comparison of numerical solution profiles for density and momentum between the reference solution and the penalized IMEX-WB-AP scheme.
We observe that the scheme is capable of accurately capturing the shock structures, although the reference solution is computed on a finer mesh which naturally exhibits sharper gradients in the shock regions.
 \begin{figure}[htbp]
  \centering
  \includegraphics[width = \linewidth]{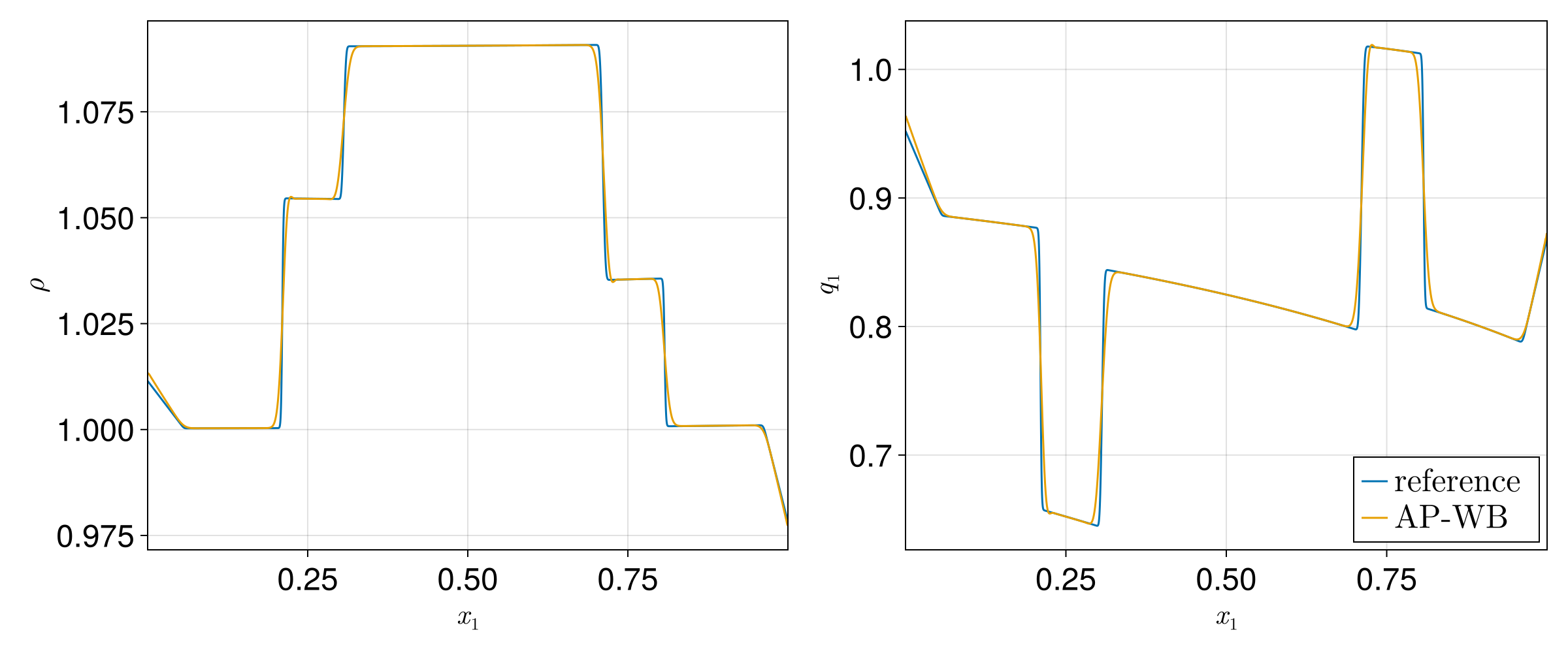}
   \vspace{-7mm}
  \caption{Riemann problem: $\veps = 0.3$. Left: density $\rho(T, x_1)$. Right: momentum $\rho u_1(T, x_1)$.  Comparison between penalized IMEX-WB-AP scheme ($10^3$ mesh points) and reference solution ($10^4$ mesh points) at $T = 0.01$. Here, $\beta = 0.5$ in \eqref{eq:dp2a2}.}
  \label{fig:riemann}
\end{figure}
 %\begin{figure}[htbp]
  %\centering
  %\includegraphics[height=0.23\textheight]{riemann/rho.png}
  %\includegraphics[height=0.23\textheight]{riemann/vel.png}
  % \vspace{-4mm}
  %\caption{Riemann problem: $\veps = 0.3$. Left: density $\rho(T, x_1)$. Right: momentum $\rho u_1(T, x_1)$.  Comparison between penalized IMEX-WB-AP scheme ($10^3$ mesh points) and reference solution ($10^4$ mesh points) at $T = 0.01$. Here, $\beta = 0.5$ in \eqref{eq:dp2a2}.}
  %\label{fig:riemann}
%\end{figure}
\subsection{2D vortex}
We consider a stationary vortex problem inspired by \cite{Zenk2018}, which serves as a low-Mach benchmark for numerical schemes.
% To assess these properties the Mach number, the kinetic energy dissipation and vorticity of flow are parameters which are taken into consideration.

The problem is set up in a square domain $[0,1] \times [0,1]$, the isentropic parameter $\gamma = 2$ and $\phi(\vx) = \phi(r) = r^2$ with radius $r = \sqrt{(x - 0.5)^2 + (y - 0.5)^2}$.
The vortex is centered at $(0.5, 0.5)$ and the initial condition reads:
\begin{equation}
\begin{aligned}
\rho(0, \vx) &= 1 + \frac{\veps^2}{2} \int_0^r \frac{u_{\theta}^2 (s)}{s} \ ds - \frac{1}{2} r^2\\
u (0, \vx) &= u_{\theta} (r) (y - 0.5) / r, \quad
v (0, \vx) = -u_{\theta} (r) (x - 0.5) / r,
\end{aligned}
\end{equation}
where $u_{\theta}$ is defined as
\vspace{-5mm}
\begin{equation*}
u_{\theta} (r) = \begin{cases}
a_1 r, \qquad &\mbox{if } r \leq r_1 \\
a_2 + a_3 r , \qquad &\mbox{if } r_1 < r \leq r_2 \\
0, \qquad &\mbox{otherwise.}
\end{cases}
\end{equation*}
The parameters $r_1 = 0.2$ and $r_2 = 0.4$ are the inner and the outer radii. $\bar{a}, a_1, a_2$ and $a_3$ are constants, where $\bar{a} = 0.1$, $a_1 = \bar{a}/r_1$, $a_2 = -\bar{a} r_2/(r_1 - r_2)$, and $a_3 = \bar{a}/(r_1 - r_2)$.

 Figure~\ref{fig:t0_mach} shows the initial Mach number $(M = \sqrt{(u^2 + v^2)/(\gamma p/\rho)})$ plot for the stationary vortex, which is independent of $\veps$.
 The vortex is allowed to evolve until a final time $T = 1.0$ (one rotation) for $\veps$ ranging from $10^{-1}$ to $10^{-4}$ on a grid of $200 \times 200$ mesh points and a fixed $CFL = 0.22$.
% \begin{figure}[htbp]
% \vspace{-4mm}
%  \centering
%\includegraphics[height=0.24\textheight]{2D_vortex/mach_0.png}
%  \caption{2D vortex: pseudocolor plot of the Mach number at $T = 0.0$.}
%  \label{fig:t0_mach}
%\end{figure}
 \begin{figure}[htbp]
 \vspace{-4mm}
  \centering
\includegraphics[width = 0.5\linewidth]{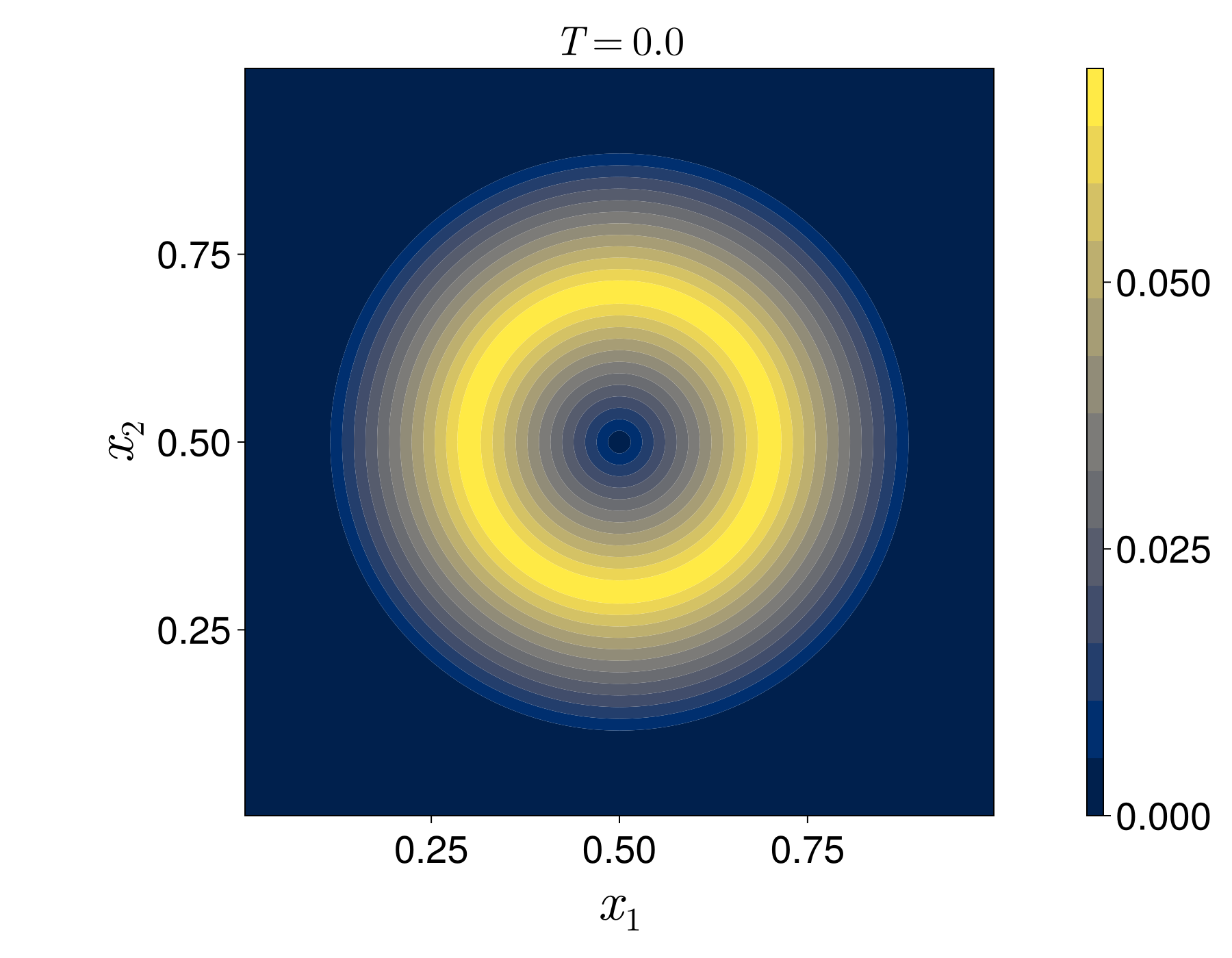}
\vspace{-4mm}
  \caption{2D vortex: pseudocolor plot of the Mach number at $T = 0.0$.}
  \label{fig:t0_mach}
\end{figure}
The Mach number profiles for $\veps \in \{10^{-1}, 10^{-2}, 10^{-3}, 10^{-4}\}$ after one rotation are shown in Figure~\ref{fig:t1_mach}. It conveys that the scheme dissipates minimally and the dissipation is independent of the value of $\veps$.
Figure~\ref{fig:ke_vort} (Left) shows the kinetic energy decay in time for all considered values of $\veps$.
Irrespective of the value of the Mach number the decay is minuscule, i.e., $\mco(10^{-4})$.
Figure~\ref{fig:ke_vort} (Right) shows that the scheme maintains an identical vorticity profile for all values of Mach number.
This showcases the uniform low Mach performance of the newly developed penalized IMEX-WB-AP scheme.
 \begin{figure}[htbp]
 \vspace{-4mm}
  \centering
\includegraphics[width = \linewidth]{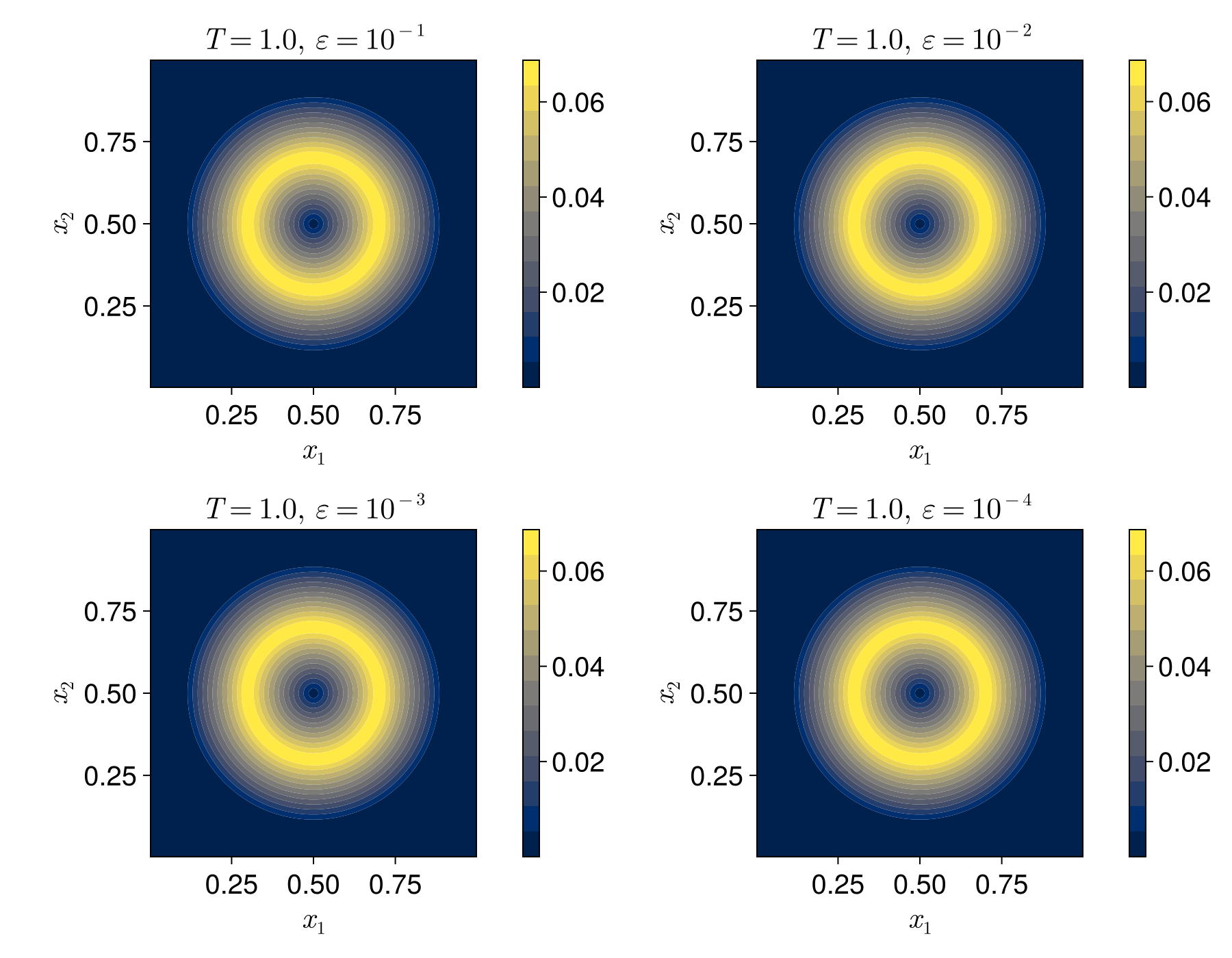}
\vspace{-9mm}
  \caption{2D vortex: pseudocolor plots of Mach number at $T = 1.0$ on $200 \times 200$ mesh, for $\veps = 10^{-1}, 10^{-2}, 10^{-3}$ and $ 10^{-4}$.}
  \label{fig:t1_mach}
\end{figure}
%\begin{figure}[htbp]
%\vspace{-3mm}
%  \centering
%\includegraphics[height=0.23\textheight]{2D_vortex/mach_m1.png}
%\includegraphics[height=0.23\textheight]{2D_vortex/mach_m2.png} \\
%\includegraphics[height=0.23\textheight]{2D_vortex/mach_m3.png}
%\includegraphics[height=0.23\textheight]{2D_vortex/mach_m4.png}
%\vspace{-5mm}
%  \caption{2D vortex: pseudocolor plots of Mach number at $T = 1.0$ on $200 \times 200$ mesh, for $\veps = 10^{-1}, 10^{-2}, 10^{-3}$ and $ 10^{-4}$}
%  \label{fig:t1_mach}
%\end{figure}
%\begin{figure}[htbp]
%  \centering
%\includegraphics[height=0.21\textheight]{2D_vortex/KE_r_decay.png}
%\includegraphics[height=0.22\textheight]{2D_vortex/vorticity.png}
%\vspace{-6mm}
%  \caption{2D vortex: Left: Relative kinetic energy decay; Right: cross section of vorticity}
%  \label{fig:ke_vort}
%\end{figure}
\begin{figure}[htbp]
  \centering
\includegraphics[width = \linewidth]{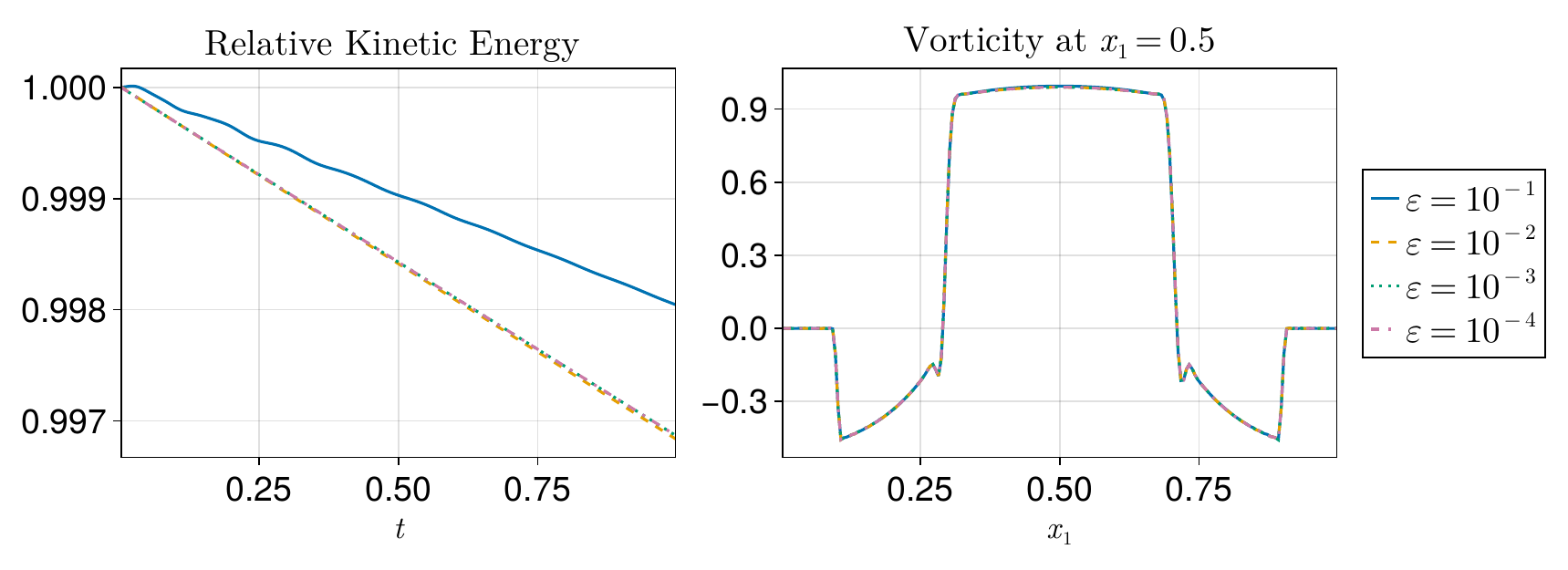}
\vspace{-10mm}
  \caption{2D vortex: Left: Relative kinetic energy decay; Right: cross section of vorticity.}
  \label{fig:ke_vort}
\end{figure}
\section{Conclusions}
\label{sec:conclusion}
In this paper we have designed, analyzed, and tested a higher-order asymptotic-preserving well-balanced scheme for the anelastic limit of the isentropic Euler equations with gravity \eqref{eq:iseuler_mass_ane}--\eqref{eq:iseuler_mom_ane}.
The design leverages the derivation of an equivalent steady-state flux-source pair \eqref{eq:lin_hydrobal} to the non-linear pair \eqref{eq:hydro_bal_an}.
This allows us to penalize the Euler system \eqref{eq:iseuler_mass_ane}--\eqref{eq:iseuler_mom_ane} with a linear flux-source pair leading to the first-order time semi-discrete AP scheme.
The simple structure of the first-order time semi-discrete AP scheme enables usage of the additive IMEX-RK framework \cite{AscherRuuthSpiteri, PR01} to obtain high-order time semi-discretizations.
As demonstrated in the introduction, the AP property is not enough for a scheme to maintain accuracy in the low Mach regime for balance laws such as the Euler system considered here.
The combination of a balance-preserving reconstruction, judicious source-term discretization and a finite-volume approach using Rusanov and central fluxes leads to an AP well-balanced fully-discrete scheme.
Numerical case studies confirm the uniform performance of the designed scheme with respect to the Mach number.

This novel approach of penalizing the governing equations using a linear equilibrium to obtain a linearly implicit scheme serves as a stepping stone to design linearly implicit additive IMEX-RK schemes for more general equations of state in the low Mach number regime.
In this case, for example, if potential temperature variations are to be considered, it will be crucial to understand how to obtain a linear, balance- and asymptotic-preserving relation.
Indeed, this is a work in progress.

\bibliographystyle{siamplain}
\bibliography{references}

@misc{artiano2026asymptoticRepo,
  title={Reproducibility repository for
         "{A}symptotic-Preserving and Well-Balanced Linearly Implicit {IMEX} Schemes
           for the Anelastic Limit of the Isentropic {E}uler Equations with Gravity"},
  author={Artiano, Marco and Ranocha, Hendrik and Samantaray, Saurav},
  year={2026},
  howpublished={\url{https://github.com/MarcoArtiano/2026_asymptotic_preserving_isentropic}},
  doi={10.5281/zenodo.19555933}
}

@book{FN09,
  author    = {Feireisl, Eduard and Novotný, Antonín},
  title     = {Singular Limits in Thermodynamics of Viscous Fluids},
  series    = {Advances in Mathematical Fluid Mechanics},
  publisher = {Birkhäuser/Springer},
  year      = {2009},
  doi       = {10.1007/978-3-7643-8843-0},
  isbn      = {978-3-7643-8843-0},
  address   = {Basel, Switzerland}
}

@article {Kle09,
 Author = {Klein, Rupert},
 Title = {Asymptotics, structure, and integration of sound-proof atmospheric flow equations},
 Journal = {Theor. Comput. Fluid Dyn.},
 Fjournal = {Theoretical and Computational Fluid Dynamics},
 Year = {2009},
 Volume = {23},
 Number = {3},
 Pages = {161--195},
 Doi = {10.1007/s00162-009-0104-y}
}

@article{OP62,
    author = {Ogura, Yoshimitsu and Phillips, Norman A.},
    title = {Scale Analysis of Deep and Shallow Convection in the Atmosphere},
    journal = {J. Atmospheric Sci.},
    fjournal = {Journal of the Atmospheric Sciences},
    volume = {19},
    number = {2},
    pages = {173-179},
    year = {1962},
    issn = {0022-4928},
    doi = {10.1175/1520-0469(1962)019<0173:SAODAS>2.0.CO;2}
}

@article {KM81,
    AUTHOR = {Klainerman, Sergiu and Majda, Andrew J.},
     TITLE = {Singular limits of quasilinear hyperbolic systems with large parameters and the incompressible limit of compressible fluids},
   JOURNAL = {Comm. Pure Appl. Math.},
  FJOURNAL = {Communications on Pure and Applied Mathematics},
    VOLUME = {34},
      YEAR = {1981},
     PAGES = {481--524},
       DOI = {10.1002/cpa.3160340403},
}

@article{FKM19,
author = {Feireisl, Eduard and Klingenberg, Christian and Markfelder, Simon},
title = {On the Low {M}ach Number Limit for the Compressible {E}uler System},
journal = {SIAM Journal on Mathematical Analysis},
volume = {51},
number = {2},
pages = {1496-1513},
year = {2019},
doi = {10.1137/17M1131799}
}

@incollection {AS20_AIMS,
    AUTHOR = {Arun, K. R. and Samantaray, Saurav},
     TITLE = {An asymptotic preserving time integrator for low {M}ach number
              limits of the {E}uler equations with gravity},
 BOOKTITLE = {Hyperbolic problems: theory, numerics, applications},
    SERIES = {AIMS Ser. Appl. Math.},
    VOLUME = {10},
     PAGES = {279--286},
 PUBLISHER = {Am. Inst. Math. Sci. (AIMS), Springfield, MO},
      YEAR = {2020},
      ISBN = {978-1-60133-023-9; 1-60133-023-5},
   MRCLASS = {35L45 (35L67 65M08)},
  MRNUMBER = {4362524},
}

@article {AS20,
    AUTHOR = {Arun, K. R. and Samantaray, Saurav},
     TITLE = {Asymptotic preserving low {M}ach number accurate {IMEX} finite
              volume schemes for the isentropic {E}uler equations},
   JOURNAL = {J. Sci. Comput.},
  FJOURNAL = {Journal of Scientific Computing},
    VOLUME = {82},
      YEAR = {2020},
    NUMBER = {2},
     PAGES = {Art. 35, 32},
      ISSN = {0885-7474},
   MRCLASS = {65M08 (35L45 35L60 35L65 35L67 35Q35 65M06)},
  MRNUMBER = {4059089},
       DOI = {10.1007/s10915-020-01138-8},
}

@article {BQR19,
    AUTHOR = {Boscarino, Sebastiano and Qiu, Jing-Mei and Russo, Giovanni
              and Xiong, Tao},
     TITLE = {A high order semi-implicit {IMEX} {WENO} scheme for the
              all-{M}ach isentropic {E}uler system},
   JOURNAL = {J. Comput. Phys.},
  FJOURNAL = {Journal of Computational Physics},
    VOLUME = {392},
      YEAR = {2019},
     PAGES = {594--618},
      ISSN = {0021-9991},
   MRCLASS = {65M06 (76N15)},
  MRNUMBER = {3950472},
       DOI = {10.1016/j.jcp.2019.04.057},
}

@misc{SAM24,
      title={Asymptotic Preserving Linearly Implicit Additive {IMEX}-{RK} Finite Volume Schemes for Low {M}ach Number Isentropic {E}uler Equations},
      author={Saurav Samantaray},
      year={2024},
      archivePrefix={arXiv},
      primaryClass={math.NA},
      url={https://arxiv.org/abs/2409.05854},
}

@article {Del10,
    AUTHOR = {Dellacherie, St{\'e}phane},
     TITLE = {Analysis of {G}odunov type schemes applied to the compressible
              {E}uler system at low {M}ach number},
   JOURNAL = {J. Comput. Phys.},
  FJOURNAL = {Journal of Computational Physics},
    VOLUME = {229},
      YEAR = {2010},
    NUMBER = {4},
     PAGES = {978--1016},
      ISSN = {0021-9991},
     CODEN = {JCTPAH},
   MRCLASS = {65M06 (76D05)},
  MRNUMBER = {2576236 (2011f:65149)},
       DOI = {10.1016/j.jcp.2009.09.044},
}

@article {DT11,
    AUTHOR = {Degond, Pierre and Tang, Min},
     TITLE = {All speed scheme for the low {M}ach number limit of the
              isentropic {E}uler equations},
   JOURNAL = {Commun. Comput. Phys.},
  FJOURNAL = {Communications in Computational Physics},
    VOLUME = {10},
      YEAR = {2011},
    NUMBER = {1},
     PAGES = {1--31},
      ISSN = {1815-2406,1991-7120},
   MRCLASS = {76M20 (65M06 76L05 76N15)},
  MRNUMBER = {2775032},
MRREVIEWER = {Thomas\ H.\ Sonar},
       DOI = {10.4208/cicp.210709.210610a},
}

@article{DP13,
author = {Dimarco, Giacomo and Pareschi, Lorenzo},
title = {Asymptotic Preserving Implicit-Explicit {R}unge--{K}utta Methods for Nonlinear Kinetic Equations},
journal = {SIAM Journal on Numerical Analysis},
volume = {51},
number = {2},
pages = {1064-1087},
year = {2013},
doi = {10.1137/12087606X}
}

@article{CDS26,
  author  = {Nicolas Crouseilles and Giacomo Dimarco and Saurav Samantaray},
  title   = {High order Asymptotic-Preserving penalized numerical schemes for the {E}uler-{P}oisson system in the quasineutral limit},
  journal = {ESAIM: Mathematical Modelling and Numerical Analysis (ESAIM: M2AN)},
  year    = {2026},
  doi     = {10.1051/m2an/2026004},
  volume = 60,
  number = 2,
  pages = "657-688",
}

@article{NBA14+,
author = {Noelle, S. and Bispen, G. and Arun, K. R. and Luk\'{a}\v{c}ov\'{a}-Medvi\v{d}ov\'{a}, M. and Munz, C.-D.},
title = {A Weakly Asymptotic Preserving Low {M}ach Number Scheme for the {E}uler Equations of Gas Dynamics},
journal = {SIAM Journal on Scientific Computing},
volume = {36},
number = {6},
pages = {B989-B1024},
year = {2014},
doi = {10.1137/120895627}
}

@article{DLV17,
author = {Dimarco, Giacomo and Loub\`{e}re, Rapha\"{e}l and Vignal, Marie-H\'{e}l\`{e}ne},
title = {Study of a New Asymptotic Preserving Scheme for the {E}uler System in the Low {M}ach Number Limit},
journal = {SIAM Journal on Scientific Computing},
volume = {39},
number = {5},
pages = {A2099-A2128},
year = {2017},
doi = {10.1137/16M1069274},
}

@article{Chandrashekar2017,
  author  = {Chandrashekar, Praveen and Zenk, Markus},
  title   = {{Well-Balanced Nodal Discontinuous {G}alerkin Method for {E}uler Equations with Gravity}},
  journal = {Journal of Scientific Computing},
  volume  = {71},
  number  = {3},
  pages   = {1062--1093},
  year    = {2017},
  doi     = {10.1007/s10915-016-0339-x},
}

@article{Chandrashekar2015,
author = {Chandrashekar, Praveen and Klingenberg, Christian},
title = {A Second Order Well-Balanced Finite Volume Scheme for {E}uler Equations with Gravity},
journal = {SIAM Journal on Scientific Computing},
volume = {37},
number = {3},
pages = {B382-B402},
year = {2015},
doi = {10.1137/140984373}
}

@article{BLY17,
title = {Asymptotic preserving {IMEX} finite volume schemes for low {M}ach number {E}uler equations with gravitation},
journal = {Journal of Computational Physics},
volume = {335},
pages = {222-248},
year = {2017},
issn = {0021-9991},
doi = {10.1016/j.jcp.2017.01.020},
author = {Georgij Bispen and Mária Lukáčová-Medvid'ová and Leonid Yelash}
}

@article{ap_jin,
author = {Jin, Shi},
title = {Efficient Asymptotic-Preserving ({AP}) Schemes For Some Multiscale Kinetic Equations},
journal = {SIAM Journal on Scientific Computing},
volume = {21},
number = {2},
pages = {441-454},
year = {1999},
doi = {10.1137/S1064827598334599},
}

@incollection {LB99,
    AUTHOR = {R. J. LeVeque and D. S. Bale},
     TITLE = {Wave propagation methods for conservation laws with source terms},
 BOOKTITLE = {Hyperbolic Problems: Theory, Numerics, Applications, R. Jeltsch and M. Fey,
eds.},
    SERIES = {Internat. Ser. Numer. Math. },
    VOLUME = {130},
     PAGES = {609--618},
 PUBLISHER = {Birkh¨ auser, Basel,},
      YEAR = {1999}
}

@article {AKS22,
    AUTHOR = {Arun, K. R. and Krishnan, Meenakshi and Samantaray, Saurav},
     TITLE = {A unified asymptotic preserving and well-balanced scheme for
              the {E}uler system with multiscale relaxation},
   JOURNAL = {Comput. \& Fluids},
  FJOURNAL = {Computers \& Fluids. An International Journal},
    VOLUME = {233},
      YEAR = {2022},
     PAGES = {Paper No. 105248, 13},
      ISSN = {0045-7930,1879-0747},
   MRCLASS = {65M08 (35L45 35L60 35L65 35L67 76Nxx)},
  MRNUMBER = {4345962},
       DOI = {10.1016/j.compfluid.2021.105248},
}

@article{TPK20,
title = {An all speed second order well-balanced {IMEX} relaxation scheme for the {E}uler equations with gravity},
journal = {Journal of Computational Physics},
volume = {420},
pages = {109723},
year = {2020},
issn = {0021-9991},
doi = {https://doi.org/10.1016/j.jcp.2020.109723},
author = {Andrea Thomann and Gabriella Puppo and Christian Klingenberg},
}

@incollection {PR01,
    AUTHOR = {Pareschi, Lorenzo and Russo, Giovanni},
     TITLE = {Implicit-explicit {R}unge-{K}utta schemes for stiff systems of differential equations},
 BOOKTITLE = {Recent trends in numerical analysis},
    SERIES = {Adv. Theory Comput. Math.},
    VOLUME = {3},
     PAGES = {269--288},
 PUBLISHER = {Nova Sci. Publ., Huntington, NY},
      YEAR = {2001},
   MRCLASS = {65L06 (65M20 82B20)},
  MRNUMBER = {2029975},
MRREVIEWER = {Kazufumi Ozawa}
}

@article {HJL12,
    AUTHOR = {Haack, Jeffrey and Jin, Shi and Liu, Jian-Guo},
     TITLE = {An all-speed asymptotic-preserving method for the isentropic
              {E}uler and {N}avier-{S}tokes equations},
   JOURNAL = {Commun. Comput. Phys.},
  FJOURNAL = {Communications in Computational Physics},
    VOLUME = {12},
      YEAR = {2012},
    NUMBER = {4},
     PAGES = {955--980},
      ISSN = {1815-2406,1991-7120},
   MRCLASS = {76N10 (35Q30 35Q31 65M08 76M12)},
  MRNUMBER = {2913446},
       DOI = {10.4208/cicp.250910.131011a},
}

@article {Jin12,
    AUTHOR = {Jin, Shi},
     TITLE = {Asymptotic preserving ({AP}) schemes for multiscale kinetic
              and hyperbolic equations: a review},
   JOURNAL = {Riv. Math. Univ. Parma (N.S.)},
  FJOURNAL = {Rivista di Matematica della Universit\`a di Parma. New Series. A
              Journal of Pure and Applied Mathematics},
    VOLUME = {3},
      YEAR = {2012},
    NUMBER = {2},
     PAGES = {177--216},
      ISSN = {0035-6298},
   MRCLASS = {82C70 (35Q35 82C80)},
  MRNUMBER = {2964096},
MRREVIEWER = {Eric Sonnendr\"{u}cker},
}

@article{KENNEDY2003139,
title = {Additive {R}unge-{K}utta schemes for convection-diffusion-reaction equations},
journal = {Applied Numerical Mathematics},
volume = {44},
number = {1},
pages = {139-181},
year = {2003},
issn = {0168-9274},
doi = {https://doi.org/10.1016/S0168-9274(02)00138-1},
author = {Christopher A. Kennedy and Mark H. Carpenter}
}

@article{AscherRuuthSpiteri,
    author = {U.M. Ascher, S.J. Ruuth, R.J. Spiteri},
    title = {Implicit–explicit {R}unge–{K}utta methods for time-dependent partial differential equations},
    journal = {Appl. Numer. Math.},
    year = {1997},
pages={151--167}
}

@book {HW96,
    AUTHOR = {Hairer, E. and Wanner, G.},
     TITLE = {Solving ordinary differential equations. {II}},
    SERIES = {Springer Series in Computational Mathematics},
    VOLUME = {14},
   EDITION = {Second},
      NOTE = {Stiff and differential-algebraic problems},
 PUBLISHER = {Springer-Verlag, Berlin},
      YEAR = {1996},
     PAGES = {xvi+614},
      ISBN = {3-540-60452-9},
   MRCLASS = {65-02 (34A09 34A45 65-01 65Lxx)},
  MRNUMBER = {1439506},
       DOI = {10.1007/978-3-642-05221-7},
}

@article {Bos07,
    AUTHOR = {Boscarino, Sebastiano},
     TITLE = {Error analysis of {IMEX} {R}unge-{K}utta methods derived from
              differential-algebraic systems},
   JOURNAL = {SIAM J. Numer. Anal.},
  FJOURNAL = {SIAM Journal on Numerical Analysis},
    VOLUME = {45},
      YEAR = {2007},
    NUMBER = {4},
     PAGES = {1600--1621},
      ISSN = {0036-1429,1095-7170},
   MRCLASS = {65L06 (34E05)},
  MRNUMBER = {2338401},
MRREVIEWER = {Brian\ Bradie},
       DOI = {10.1137/060656929},
}

@article{boscarino,
author = {Boscarino, S. and Pareschi, L. and Russo, G.},
title = {{Implicit-Explicit Runge--Kutta schemes for hyperbolic systems and kinetic equations in the diffusion limit}},
journal = {SIAM J. Sci. Comput.},
volume = {35},
number = {1},
pages = {A22-A51},
year = {2013}
}

@article{BR13,
author = {Boscarino, Sebastiano and Russo, Giovanni},
title = {Flux-Explicit {IMEX} {R}unge--{K}utta Schemes for Hyperbolic to Parabolic Relaxation Problems},
journal = {SIAM Journal on Numerical Analysis},
volume = {51},
number = {1},
pages = {163-190},
year = {2013},
doi = {10.1137/110850803},
}

@article {GP16,
    AUTHOR = {Guermond, Jean-Luc and Popov, Bojan},
     TITLE = {Fast estimation from above of the maximum wave speed in the
              {R}iemann problem for the {E}uler equations},
   JOURNAL = {J. Comput. Phys.},
  FJOURNAL = {Journal of Computational Physics},
    VOLUME = {321},
      YEAR = {2016},
     PAGES = {908--926},
      ISSN = {0021-9991,1090-2716},
   MRCLASS = {76N15 (35Q20)},
  MRNUMBER = {3527597},
MRREVIEWER = {Bernard\ Ducomet},
       DOI = {10.1016/j.jcp.2016.05.054},
}

@article{SBG+99,
title = {Extension of Finite Volume Compressible Flow Solvers to Multi-dimensional, Variable Density Zero {M}ach Number Flows},
journal = {Journal of Computational Physics},
volume = {155},
number = {2},
pages = {248-286},
year = {1999},
issn = {0021-9991},
doi = {https://doi.org/10.1006/jcph.1999.6327},
author = {T. Schneider and N. Botta and K.J. Geratz and R. Klein},
}

@phdthesis{Zenk2018,
  author      = {Markus Zenk},
  title       = {On Numerical Methods for Astrophysical Applications},
  type        = {PhD thesis},
  school      = {Universit{\"a}t W{\"u}rzburg},
  year        = {2018},
}
\end{document}